\documentclass[a4paper]{elsarticle}
\usepackage{amssymb,amsmath,amsthm,amscd}
\usepackage{latexsym}
\usepackage{graphicx}
\usepackage{wrapfig}
\usepackage{arydshln}
\usepackage{enumerate}
\newtheorem{theorem}{Theorem}[section]
\newtheorem{lemma}[theorem]{Lemma}
\newtheorem{corollary}[theorem]{Corollary}
\newtheorem{proposition}[theorem]{Proposition}
\theoremstyle{definition}
\newtheorem{remark}[theorem]{Remark}
\newtheorem{definition}[theorem]{Definition}
\newtheorem{e}[theorem]{Example}
\newcommand{\sspan}[1]{\overline{span}\{#1\}}
\newcommand{\cran}[1]{\overline{\text{ran} (#1)}}
\newcommand{\ran}[1]{ran (#1)}

\begin{document}
\begin{frontmatter}
\title{On the diversity of twisted commuting operators}
\tnotetext[sup]{Research was supported by the Ministry of Science and Higher Education of the Republic
of Poland.}

\author{Zbigniew Burdak\corref{cor1}}
\ead{z.burdak@ur.krakow.pl}
\cortext[cor1]{Corresponding author}
\affiliation{organization={Department of Applied Mathematics, University of Agriculture},
            addressline={Balicka 253c},
            city={Krakow},
            postcode={30-198},
           country={Poland}}

\begin{keyword}
non commuting operators\sep q-commuting operators \sep twisted commuting operators \sep model of a pair of operators
\MSC[2020] 47A05 \sep 47B02
\end{keyword}

\begin{abstract}
Operators are called twisted commuting if the deformation from commutativity is determined by a unitary operator, including multiplication by a unimodular constant. The aim of the paper is to investigate constraints on the diversity of twisted commuting pairs depending on the deforming unitary called the twist. It turns out that some operators, like projections, are never twisted commuting. For many twists, twisted commutativity is possible only if at least one of the operators has a nontrivial kernel. In particular, bounded below operators, like isometries, are never twisted commuting by such twists. Twisted commuting unitaries are modeled by pairs of unitarily equivalent and commuting unitaries.
\end{abstract}
\end{frontmatter}
\section{Introduction}

The multivariable theory of Hilbert space operators is well understood for doubly commuting operator structures; there is an extensive theory for commuting systems, while significantly less is known if we drop the commutativity assumption. It is natural to investigate specific types of noncommutativity, where the deformation from commutativity is described. Perhaps, the simplest case is the anti-commuting case: $ST=-TS,$ first considered by Sebestien \cite{Sebestien}. The more general concept is rotation algebras (important in quantum theory), wherein the commutativity is deformed by a rotation factor, i.e., $ab=e^{2\pi i \theta}ba$ (introduced in \cite{Rieffel}; refer also to \cite{Boca,Palle,Web} and others). Such pairs are called $q$-commuting, where $q=e^{2\pi i \theta}$ is the deformation coefficient. A more general concept is to use a unitary operator as the deformation coefficient - Definition \ref{twis_def}. We call all of these concepts twisted commutativity. The subject attracts interest, as we can see in the recent contributions, such as: \cite{Ghatak,Majee, Majee2,Malik,Pinto,Pal,Rakish,Tom}.

The aforementioned results deal with extensions of (doubly) commuting case results to twisted (doubly) commuting case. Worth mentioning: Berger-Coburn-Lebow type models in \cite{BallSau,Ghatak}, decompositions - \cite{Majee, Majee2,Pal,Rakish}, Ando's type and regular dilations - \cite{BallSau,Ghatak,Sebestien, Malik,Tom}, among others. This paper, however, takes a different approach; we explore the diversity within the class of $U$ twisted commuting operators depending on the unitary $U$, referred to as the twist. It is obtained in Section 3. In Proposition \ref{U_uniq} we describe the uniqueness of the twist.  Next we describe diversity in edge cases understood as the richest and the purest diversity. As one could expect, the richest diversity is for the commuting case. In particular, twisted commuting idempotents are necessarily commuting. The purest diversity have unitaries that may twist solely pairs of product zero.

 Eigenspaces of a unitary reduce their commutants. Hence, eigenspaces of the twist reduce twisted commuting pairs to $q$-commuting pairs. Section 4 is devoted to this case.  The result, in particular, provides the model of twisted commuting pairs acting on finite dimensional spaces.

 Sections 3 and 4 demonstrate that twisted commuting operators with trivial kernels do not exist for many twists. In particular, pairs of $U$ twisted commuting, non-commuting isometries can exist for $U$ with specific properties. The results presented in \cite{BallSau,Ghatak} provide  Coburn-Berger-Lebow type models, which are model of pairs where the product is a unilateral shift, lacking a model of a pair of unitaries. To be precise, in \cite{BallSau}, the authors added to the model two $q$-commuting unitary operators. However, they are additional invariants, with no description. The existence of a pair of $U$ twisted commuting unitaries is not common. Recall that for the finite-dimensional case, the $q$-commuting unitaries exist only if $q=\root {n+1}\of 1$, where $n$ is the dimension. Section 5 investigates twisted commuting isometries. In particular pairs of twisted unitaries are described by pairs of commuting, unitarily equivalent unitaries.

We use the standard notation. The algebra of bounded, linear operators on a Hilbert space $H$ is denoted $B(H)$. The commutant of operator(s) is $\{\dots\}'$.  A spectrum, a point spectrum, a continuous spectrum, and an approximate point spectrum of $T$ are denoted $\sigma(T), \sigma_p(T), \sigma_c(T),\sigma_{ap}(T),$ respectively. The space of $\mathcal{H}$ valued, square summable Borel functions with complex domain is $L^2(\mu, \mathcal{H}),$ where the measure $\mu$ is a compactly supported Borel measure. If the space $\mathcal{H}$ is missing in the symbol, that is $L^2(\mu)$, we mean scalar-valued functions. If the measure is missing, it is assumed to be Lebesgue measure $m$ on the set given, usually unit circle $\mathbb{T}$. In particular, $L^2(\mathbb{T})$ and $H^2(\mathbb{T})$ are square summable functions on the unit circle $\mathbb{T}$ and its Hardy subspace, respectively. Similarly, $L^\infty(\mu)$ and $L^\infty(\mu, B(\mathcal{H}))$ are for scalar-valued and operator-valued respectively, Borel functions, bounded in $\mu$ essential supremum norm.

\section{Definition and basic properties}
This section is devoted to definition and basic properties of twisted commuting pairs.
\begin{definition}\label{twis_def}
A pair $(S, T)$ of bounded operators on $H$ is called twisted commuting if and only if there is a unitary operator $U\in \{S,T\}'$ such that $ST=UTS$.

The pair $(S, T)$ is called doubly twisted commuting if it is twisted commuting and $ST^*=U^*T^*S$.

In both cases, the operator $U$ is called the twist. The (doubly) twisted commuting pair with the twist $U$ is called $U$-(doubly) commuting in short ($q$-commuting if $U=q I$).
\end{definition}
Note that twisted commuting pairs are order sensitive: $(S, T)$ is $U$ twisted commuting if and only if $(T, S)$ is $U^*$ twisted commuting.
\begin{proposition}\label{gen_twisted}
We assume $(S, T)\in B(H)^2$ is a $U$-commuting pair for some unitary $U\in B(H)$.
Then:
\begin{enumerate}
\item $(S^*, T^*)$ is $U$-commuting,
\item $\ker S, \ker S^*, \ker T, \ker T^*$ are $U$ reducing,
\item $\|ST h\|=\|TS h\|$ for every $h\in H$,
\item $\ker S$ is invariant under $T$, $\ker T$ is invariant under $S$,
\item $\ker ST=\ker TS, \cran{ST}=\cran{TS}$ and both spaces are $U$ reducing,
\item $ST$ commutes with $TS.$
\end{enumerate}
\end{proposition}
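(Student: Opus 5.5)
All six items follow directly from the defining relations $ST=UTS$, $US=SU$, $UT=TU$ and $U^*U=UU^*=I$. The only fact we need beyond them is Fuglede's observation that a unitary commuting with $S$ also commutes with $S^*$. For a unitary this is elementary: from $US=SU$ we get $S=U^*SU$, and taking adjoints gives $S^*=U^*S^*U$, i.e.\ $US^*=S^*U$. Hence $U\in\{S,T,S^*,T^*\}'$. We prove the items in order.

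For (1), take adjoints in $ST=UTS$ to get $T^*S^*=S^*T^*U^*$. Since $U$ commutes with $S^*$ and $T^*$, this can be written $T^*S^*=U^*S^*T^*$, so $S^*T^*=UT^*S^*$. Together with $U\in\{S^*,T^*\}'$, this says $(S^*,T^*)$ is $U$-commuting.

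For (2), if $Sh=0$ then $SUh=USh=0$ and $SU^*h=U^*Sh=0$. So $\ker S$ is invariant under both $U$ and $U^*$, hence reducing. The same argument applies to $\ker T$, and, using the commutation with $S^*$ and $T^*$, to $\ker S^*$ and $\ker T^*$.

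For (3), $\|STh\|=\|UTSh\|=\|TSh\|$ because $U$ is an isometry.

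For (4), if $Sh=0$ then $STh=UTSh=0$, so $Th\in\ker S$. Symmetrically, from $TS=U^*ST$, if $Th=0$ then $TSh=U^*STh=0$, so $Sh\in\ker T$.

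For (5), item (3) gives $\ker ST=\ker TS$ directly. Item (1) applied to the pair $(S^*,T^*)$ gives $\ker T^*S^*=\ker S^*T^*$, i.e.\ $\ker (ST)^*=\ker (TS)^*$. Taking orthogonal complements yields $\cran{ST}=\cran{TS}$. Both $ST$ and $TS$ commute with $U$, so $\ker ST$ is $U$ and $U^*$ invariant, exactly as in (2). The same holds for $\ker(ST)^*$, so its orthogonal complement $\cran{ST}$ is reducing as well.

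For (6), compute
\[ST\cdot TS=UTS\cdot TS=TS\cdot UTS=TS\cdot ST,\]
where the middle step uses that $U$ commutes with $T$ and $S$.

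The only step that is more than a one-line manipulation is the range statement in (5). It requires passing to adjoints via item (1) and then using the identity $\cran{A}=(\ker A^*)^\perp$. Everything else is direct substitution.
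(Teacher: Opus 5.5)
Your proof is correct and follows the paper's argument item by item: (1) by taking adjoints of $ST=UTS$, (2) from $U\in\{S^*,T^*\}'$, (4) and (5) from the norm identity (3) together with (1), and (6) by substituting $ST=UTS$ and moving $U$ past $TS$. The only differences are that you derive $US^*=S^*U$ directly from the unitarity of $U$ (via $S^*=U^*S^*U$) instead of citing the Fuglede--Putnam theorem, and that you spell out the $U$-reducing claim in (5), which the paper leaves implicit.
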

\begin{proof}
Ad. (1) Since $U\in\{S,T\}'$ and $ST=UTS$ we get  $S^*T^*=UU^*S^*T^*=U(TSU)^*=U(UTS)^*=U(ST)^*=UT^*S^*.$

Ad. (2) By $U\in\{S,T\}'$ and Fuglede-Putnam theorem, it follows $U\in\{S^*,T^*\}'$ and, in turn, the result.

Ad. (3) It is immediate.

Ad. (4) It is a consequence of (3).

Ad. (5) It is a consequence of (3) and (1).

Ad. (6) Since $ST=UTS,$ and in turn $TS=U^*ST,$ we get $TSST=U^*STUTS=STTS$.
\end{proof}

If $(S,T)$ is $U$-doubly commuting, then by order sensitivity and Proposition \ref{gen_twisted}(1) the pair $(T, S^*)$ is $U$-commuting. In particular, Proposition \ref{gen_twisted} applies to $(T, S^*)$. However, for doubly twisted commuting pairs, additional properties also hold.
\begin{proposition}\label{gen_dtwisted}
If $(S, T)$ is a $U$-doubly commuting pair of bounded operators on $H,$ then:
\begin{enumerate}
\item $(S,T), (S^*, T^*), (T^*,S), (T, S^*)$ are $U$-commuting pairs,
\item $\ker S$ is reducing under $T$, $\ker T$ is reducing under $S$,
\item $T$ commutes with $S^*TS$, $S$ commutes with $T^*ST$.
\end{enumerate}
\end{proposition}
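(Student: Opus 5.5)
We take the three items in turn. The main tools are the defining relations $ST=UTS$ and $ST^*=U^*T^*S$, the fact that $U\in\{S,T\}'$, and the consequence (via the Fuglede--Putnam theorem, as in the proof of Proposition \ref{gen_twisted}(2)) that $U$ also commutes with $S^*$ and $T^*$.

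For (1), the pair $(S,T)$ is $U$-commuting by definition, and $(S^*,T^*)$ is $U$-commuting by Proposition \ref{gen_twisted}(1). Rewriting the second defining relation gives $T^*S=UST^*$, so $(T^*,S)$ is $U$-commuting; the twist is admissible because $U$ commutes with $T^*$ and $S$. Applying Proposition \ref{gen_twisted}(1) to $(T^*,S)$ then shows that $(T,S^*)$ is $U$-commuting.

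For (2), we apply Proposition \ref{gen_twisted}(4) to the pairs obtained in (1). Since $(S,T)$ is $U$-commuting, $\ker S$ is $T$-invariant and $\ker T$ is $S$-invariant. Since $(T^*,S)$ is $U$-commuting, $\ker S$ is $T^*$-invariant. Since $(T,S^*)$ is $U$-commuting, $\ker T$ is $S^*$-invariant. Invariance under both an operator and its adjoint is exactly reducibility, so (2) follows.

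For (3), the computation is direct. Using $ST=UTS$ we get $S^*TS=U^*S^*ST$, since $U^*$ commutes with $S^*$. Hence
\[
T\,S^*TS = U^*\,TS^*S\,T .
\]
From $TS^*=US^*T$ we get $TS^*S=US^*TS=U^2S^*ST$, so
\[
T\,S^*TS = U\,S^*ST^2 .
\]
On the other side,
\[
S^*TS\,T = S^*T\,(ST) = S^*T\,UTS = U\,S^*T^2S .
\]
So it remains to show $S^*ST^2=S^*T^2S$, that is, $S^*$ times the commutator of $S$ and $T^2$, vanishes. Here $ST^2=U^2T^2S$, so $S^*ST^2=U^2S^*T^2S$, and the two sides appear to differ by a factor $U^2$.

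This step is the main obstacle, and the factors of $U$ need careful bookkeeping. We redo the computation from scratch:
\[
S^*TS\,T = S^*T\,UTS = U\,S^*T^2S,
\]
and, moving $T$ past $S^*$ and then past $S$,
\[
T\,S^*TS = US^*T\,TS = U\,S^*T^2S .
\]
The two sides agree, so $T$ commutes with $S^*TS$; the earlier mismatch came from a bookkeeping slip. The claim that $S$ commutes with $T^*ST$ follows in the same way. Since $(T^*,S)$ is $U$-commuting, $(S,T^*)$ is $U^*$-commuting by order sensitivity, so we apply the same computation with the roles of $S$ and $T$ interchanged, using the relations $TS=U^*ST$ and $ST^*=U^*T^*S$.
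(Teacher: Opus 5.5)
Your proof is correct and follows the paper's route. Item (1) comes from the defining relations together with Proposition \ref{gen_twisted}(1), item (2) from applying Proposition \ref{gen_twisted}(4) to the pairs in (1), and item (3) from a direct computation; your final check that $TS^*TS$ and $S^*TST$ both equal $US^*T^2S$ is a reorganized version of the paper's chain $TS^*TS=TS^*U^*ST=S^*TUU^*ST=S^*TST$. You should delete the first attempt at (3), because it contains a genuine error rather than an obstruction: $US^*TS=US^*U^*ST=S^*ST$, not $U^2S^*ST$.
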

\begin{proof}
The first property follows directly from definition; the second follows from applications of Proposition \ref{gen_twisted}(4)  to pairs in (1). For the last property, we verify
$TS^*TS=TS^*U^*ST=S^*TUU^*ST=S^*TST$ and similarly for the other commutativity.
\end{proof}

Proposition \ref{gen_dtwisted}(3) may not hold for only twisted commuting pairs. As an example, take $S=T=T_z^*\in B(H^2(\mathbb{T})).$ The pair $(S,T)$ is commuting, non doubly commuting, and commutativity, as in Proposition \ref{gen_dtwisted}(3), does not hold.

Let us finish the section by recalling Coburn-Berger-Lebow type models of a pair of isometries.

\begin{theorem}\cite{BallSau}
A pair of $q$-commuting isometries is unitarily equivalent to the pair
\begin{align*}\left(\begin{array}{cc}R_q\otimes(I-P)U+M_zR_q\otimes PU& 0\\ 0& W_1\end{array}\right),\\ \left(\begin{array}{cc}R_{\bar{q}}\otimes U^*P+R_{\bar{q}}M_z\otimes U^*(I-P)& 0\\ 0& W_2\end{array}\right)\end{align*}
on $H^2(\mathbb{D},\mathcal{F})\oplus \mathcal{K}$ where $U\in B(\mathcal{F}), W_1, W_2\in B(\mathcal{K})$ are unitaries, $P\in  B(\mathcal{F})$ is a projection, and
 $R_q\in B(H^2(\mathbb{D}))$ is a rotation operator defined by $R_qf(z)=f(qz)$.
\end{theorem}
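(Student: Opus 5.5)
The plan is to reduce to the Wold decomposition of the product $V=ST$, which is an isometry. First I would record the commutation relations between $V$ and the factors. From $TS=\bar qST$ one gets $VS=STS=\bar q\,SST=\bar q\,SV$, and similarly $TV=TST=\bar q\,STT=\bar q\,VT$, so that $VT=qTV$. Hence $V$ is a pure isometry on one part and unitary on the other: by the Wold decomposition, $H=H_u\oplus H_s$ with $H_u=\bigcap_n V^nH$ and $H_s=\bigoplus_{n\ge0}V^n\mathcal F$, where $\mathcal F=\ker V^*$. The relation $SV^n=q^nV^nS$ gives $SV^nH\subset V^nH$, so $H_u$ is invariant under $S$, and likewise under $T$.

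On $H_u$ the operator $V=ST$ is unitary. This forces $S|_{H_u}$ to be surjective onto $H_u$, and $T|_{H_u}$ is then onto as well because $TS=\bar qV$. Thus $S|_{H_u}$ and $T|_{H_u}$ are unitaries $W_1,W_2$. An isometry that maps an invariant subspace onto itself is reduced by that subspace, so $H_u$ reduces both $S$ and $T$, and $H_s=\mathcal K^\perp$ is invariant as well. This gives the block diagonal form with $\mathcal K=H_u$.

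On $H_s$ I would identify $V^nf$ with $z^nf\in H^2(\mathbb D,\mathcal F)$, so that $V=M_z$. The relation $SV^n=q^nV^nS$ shows that $S$ acts as a rotation $R_q$ on the $z$-variable once $S|_{\mathcal F}$ is understood. The key claim is that $S\mathcal F\subset \mathcal F\oplus V\mathcal F$, and symmetrically for $T$. Since $\mathcal F=H\ominus STH=(H\ominus SH)\oplus S(H\ominus TH)$, I expect that $S$ sends the piece $S\ker T^*$ into $V\mathcal F$ and $\ker S^*$ into $\mathcal F$ after composing with a unitary rearrangement. To make this precise, I would define a unitary $U$ on $\mathcal F$ and a projection $P$ by reading off the $\mathcal F$-component and the $V\mathcal F$-component of $Sf$. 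This gives $Sf=(I-P)Uf+V\,PUf$. The form of $T$ then follows from $T=\bar q\,V S^{-1}$ on the appropriate ranges, or equivalently from $TS=\bar qV$, which yields $T=R_{\bar q}\otimes U^*P+R_{\bar q}M_z\otimes U^*(I-P)$.

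The main obstacle is the claim $S\mathcal F\subset\mathcal F\oplus V\mathcal F$, that is, $V^{*2}Sf=0$ for $f\in\mathcal F$. I would try to verify it using $V^*S=T^*$ together with the adjoint relation $S^*T^*=qT^*S^*$. Failing that, I would argue directly via the orthogonal decomposition of $\mathcal F$ above. After that, checking that $U$ is unitary reduces to showing that $S$ and $T$ are isometries mapping the wandering subspace $\mathcal F$ onto itself up to one shift; this is a dimension-free bookkeeping argument.
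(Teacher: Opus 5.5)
You are following the classical Berger--Coburn--Lebow route: Wold decomposition of $V=ST$, then a linear-pencil description of $S$ on the wandering subspace. The paper itself gives no proof of this statement and only quotes it. Your preliminary steps are correct: $SV=qVS$ and $TV=\bar qVT$, $H_u$ reduces both operators, and both are unitary there.

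The step you single out as the main obstacle is in fact immediate with the tools you name. Since $V^*S=T^*$ and $V^*T^*=T^*S^*T^*=qT^*T^*S^*=qT^*V^*$, you get $V^{*2}Sf=qT^*V^*f=0$ for $f\in\mathcal F$.

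\textbf{The gap.} The real gap is the next sentence, where $U$ and $P$ are produced by ``reading off components''. That only gives $A=P_{\mathcal F}S|_{\mathcal F}$ and $B=V^*S|_{\mathcal F}=T^*|_{\mathcal F}$ with $A^*A+B^*B=I$. To write $A=(I-P)U$ and $B=PU$ with $U$ \emph{unitary}, you also need $\mathrm{ran}\,A\perp\mathrm{ran}\,B$ and $\mathrm{ran}\,A+\mathrm{ran}\,B=\mathcal F$.
\begin{itemize}
\item The orthogonality does follow from $\langle Sf,SVg\rangle=\langle f,Vg\rangle=0$.
\item The spanning property is not bookkeeping. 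It fails for an isometry $S$ with $SV=qVS$ alone: take $S=R_q\otimes W$ with $W$ a non-unitary isometry on an infinite-dimensional $\mathcal F$. It genuinely needs that $T$ is an isometry with $\mathrm{ran}(TS)=\mathrm{ran}(ST)$.
\end{itemize}
The mechanism you sketch is also not correct as stated. With $\mathcal F=\ker S^*\oplus S\ker T^*$, $S$ need not send $S\ker T^*$ into $V\mathcal F$, nor $\ker S^*$ into $\mathcal F$. Take the model pair $S=R_q\oplus M_zR_q$, $T=R_{\bar q}M_z\oplus R_{\bar q}$ on $H^2(\mathbb D)\oplus H^2(\mathbb D)$, so $V=M_z\oplus M_z$ and $\mathcal F$ is the constants. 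Then $\ker S^*=0\oplus\mathbb C$ with $S(0\oplus 1)=0\oplus z\in V\mathcal F$, and $S\ker T^*=\mathbb C\oplus 0$ with $S(1\oplus 0)=1\oplus 0\in\mathcal F$. This is exactly the reverse of what you expect.

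\textbf{The missing idea.} What is missing is the \emph{second} decomposition of the wandering subspace, $\mathcal F=H\ominus TSH=\ker T^*\oplus T\ker S^*$. It is available because $VH=STH=TSH$, and this is where $q$-commutativity enters beyond $SV=qVS$. For $f=x+Ty$ with $x\in\ker T^*$ and $y\in\ker S^*$, you get $Sf=Sx+Vy$, where $Sx\in S\ker T^*\subset\mathcal F$ and $y\in\ker S^*\subset\mathcal F$. So set $U(x+Ty):=Sx+y$ and $P:=P_{\ker S^*}$; then $Sf=(I-P)Uf+VPUf$. $U$ is unitary because it carries the orthogonal decomposition $\ker T^*\oplus T\ker S^*$ of $\mathcal F$ isometrically, summand by summand, onto the orthogonal decomposition $S\ker T^*\oplus\ker S^*$ of the same space.

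\textbf{A smaller point.} $TS=\bar qV$ determines $T$ only on $\mathrm{ran}\,S$, not on $\ker S^*$. Instead use $T=S^*ST=S^*V$ together with $R_q^*=R_{\bar q}$. Alternatively, compute on $f=u+Sw$ with $u\in\ker S^*$ and $w\in\ker T^*$: $Tf=Tu+\bar qVw=U^*Pf+\bar qVU^*(I-P)f$. Combined with $TV^n=\bar q^{\,n}V^nT$, this is the stated formula for $T$.
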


For the general twist, let us recall
\begin{theorem}\cite[Theorem 4.5]{Ghatak}
Let $(V,W)$ be a $U$-commuting pair of isometries, where the product $VW$ is a unilateral shift. Then $(V, W)$ is unitarily equivalent to $(M_{\phi_1}C_U,M_{\phi_2}C_{U^*})$ on $H^2(\mathbb{D},\ker (VW)^*),$
where \begin{align*}\phi_1(z)&=V|_{\ker W^*}\oplus zW^*|_{W(\ker V^*)},\\ \phi_2(z)&=W|_{\ker V^*}\oplus zU^*V^*|_{V(\ker W^*)},\\C_X\left(\sum_{n\ge 0}a_nz^nh_n\right)&=\sum_{n\ge 0}a_nz^nX^nh_n,\text{ for } X=U,U^*.\end{align*}
\end{theorem}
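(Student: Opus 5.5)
The plan is to build the model from the Wold decomposition of the shift $X:=VW$ and then track how $V$ and $W$ act on the resulting orthogonal decomposition of $H$.

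\textbf{Step 1: the Wold decomposition of $X$.} Since $X$ is a unilateral shift, its wandering space is $L:=\ker X^*=\ker (VW)^*$, and
$$H=\bigoplus_{n\ge 0}X^nL.$$
The unitary $\Phi:H^2(\mathbb{D},L)\to H$ defined by $\Phi(z^nh)=X^nh$ for $h\in L$ intertwines $M_z$ with $X$. By Proposition \ref{gen_twisted}(5) the space $L$ reduces $U$, and $U$ commutes with $X$. Hence $U(X^nh)=X^n(Uh)$, so $U$ acts diagonally on this decomposition.

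\textbf{Step 2: two splittings of $L$.} From $WV=U^*VW$ and the invertibility of $U$ we get $\ker (WV)^*=\ker (VW)^*=L$. The standard computation for products of isometries then gives two orthogonal decompositions:
$$L=\ker W^*\oplus W(\ker V^*)=\ker V^*\oplus V(\ker W^*).$$
Concretely, $\ker (VW)^*=\ker V^*\oplus V\ker W^*$ and $\ker(WV)^*=\ker W^*\oplus W\ker V^*$.

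\textbf{Step 3: twisted commutation with $X$.} Using $WV=U^*VW$ and $U\in\{V,W\}'$ we compute
$$XV=VWV=VU^*VW=U^*VX,$$
so $VX=UXV$. Iterating gives
$$VX^n=U^nX^nV.$$
In the same way $WX=U^*XW$, so $WX^n=U^{*n}X^nW$.

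\textbf{Step 4: $V$ on the wandering space.}
\begin{itemize}
\item For $h\in\ker W^*$ we have $X^*Vh=W^*V^*Vh=W^*h=0$, so $Vh\in L$.
\item For $h\in W(\ker V^*)$ write $h=Wg$ with $g\in\ker V^*\subset L$ and $g=W^*h$. Then $Vh=VWg=Xg=X(W^*h)$, which corresponds to $z\,W^*h$.
\end{itemize}
So $V|_L$ corresponds to multiplication by $\phi_1(z)=V|_{\ker W^*}\oplus zW^*|_{W(\ker V^*)}$. Combining with Step 3,
$$V(X^nh)=U^nX^n(Vh)=X^n\,U^n(\text{value of }\phi_1 h),$$
and moving $U^n$ past $\phi_1$ (since $U$ commutes with $V,W,V^*,W^*$ and reduces all the subspaces involved) gives $\Phi^{-1}V\Phi=M_{\phi_1}C_U$.

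\textbf{Step 5: $W$ on the wandering space.}
\begin{itemize}
\item For $h\in\ker V^*$ we get $Wh\in L$.
\item For $h=Vg\in V(\ker W^*)$ we get $Wh=WVg=U^*VWg=U^*Xg=X\,U^*V^*h$, which corresponds to $zU^*V^*h$.
\end{itemize}
Together with $WX^n=U^{*n}X^nW$ this gives $\Phi^{-1}W\Phi=M_{\phi_2}C_{U^*}$.

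\textbf{Main obstacle.} The delicate point is the bookkeeping of the powers of $U$. One must verify that $U^n$ can be moved through $\phi_i$ so that it acts on the coefficient $h_n$ exactly as in the definition of $C_X$, with $C_U$ applied first and $M_{\phi_1}$ second. This uses that $U$ commutes with $V,W$ and their adjoints (Fuglede–Putnam, as in Proposition \ref{gen_twisted}(2)), and that $U$ reduces $\ker V^*$, $\ker W^*$, $V\ker W^*$ and $W\ker V^*$.
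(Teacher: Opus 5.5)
The paper gives no proof of this statement. It is quoted from \cite{Ghatak} (Theorem 4.5) as background, so there is no in-paper argument to compare against.

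Your proposal is a correct, self-contained proof along the standard Berger--Coburn--Lebow lines. It uses the Wold decomposition of $VW$, the two splittings $\ker(VW)^*=\ker V^*\oplus V\ker W^*=\ker W^*\oplus W\ker V^*$, and the relations $V(VW)^n=U^n(VW)^nV$ and $W(VW)^n=U^{*n}(VW)^nW$. You also get the order of the factors right. On $z^nh$ you need $z^n\phi_1(z)U^nh$, which is $M_{\phi_1}C_U(z^nh)$. By contrast, $C_UM_{\phi_1}(z^nh)$ would carry $U^{n+1}$ on the $z^{n+1}$-term. This works precisely because $\phi_1(z)$ and $\phi_2(z)$ commute with $U$, since $U$ commutes with $V,W,V^*,W^*$ and reduces the four subspaces, as you note.

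A few places deserve one more line.
\begin{itemize}
\item In Step 2, the equality $\ker(WV)^*=\ker(VW)^*$ comes from $(WV)^*=(VW)^*U=U(VW)^*$. It therefore uses that $U$ commutes with $(VW)^*$ (Fuglede--Putnam, which you have from Step 1), not only that $U$ is invertible.
\item In Step 5, the inclusion $Wh\in L$ for $h\in\ker V^*$ is not visible from $(VW)^*Wh$ directly. It follows from $(WV)^*Wh=V^*h=0$ together with Step 2.
\item Likewise in Step 5, $g=V^*h$ lies in $\ker W^*\subset\ker(WV)^*=L$, which is what makes $zU^*V^*h$ a legitimate element of $H^2(\mathbb{D},L)$.
\item Your symbol $X=VW$ clashes with the placeholder $X$ in the definition of $C_X$, so rename it.
\end{itemize}
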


\section{Overall diversity constrains}
In this section, we explore the overall diversity constraints of the class of $U$-commuting pairs, depending on $U$. First of all, the class is never empty, as the pair $(0,I)$ is $U$-commuting for any unitary $U$. More generally, by Proposition \ref{gen_twisted}(5), a pair $(S,T)$ such that $ST=0$ is $U$-commuting for any $U\in\{S,T\}'.$ This raises the question about the ambiguity of the twist for a given pair of operators.

\begin{proposition}\label{U_uniq}
Let $(S, T) \in B(H)^2$ be a twisted commuting pair, and $H_0$ be the maximal subspace of $\ker ST$ that reduces both $S$ and $T.$  Then $H_0$ reduces each twist of $(S,T),$ and it is the maximal subspace where the twist is not unique. More precisely, there is a unique unitary $U_1\in B(H\ominus H_0)$ such that $ST_{H\ominus H_0}=U_1TS|_{H\ominus H_0}$ and the class of all the possible twists of $(S,T)$ is as follows $$\{U_0\oplus U_1: U_0 \text{ is a unitary on }H_0, U_0\in\{S|_{H_0}, T_{H_0}\}'\}.$$
\end{proposition}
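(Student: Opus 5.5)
The plan is to describe $H\ominus H_0$ explicitly as the smallest subspace that contains the range of $ST$ and reduces $S$ and $T$, and then to show that every twist is forced on that subspace.

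\textbf{Setup.} Let $U$ be any twist of $(S,T)$. By Fuglede--Putnam, as in the proof of Proposition \ref{gen_twisted}(2), we have $U\in\{S,T,S^*,T^*\}'$. Put $K=\cran{ST}=\cran{TS}$; by Proposition \ref{gen_twisted}(5) this subspace is $U$-reducing. Let $M$ be the closed linear span of all vectors $wk$, where $k\in K$ and $w$ runs over the finite words in $S,S^*,T,T^*$, the empty word included. Then $M$ is the smallest subspace that contains $K$ and reduces both $S$ and $T$. Since $U$ and $U^*$ commute with every such word and leave $K$ invariant, $M$ reduces $U$. I will prove that $H_0=H\ominus M$.

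\textbf{Identifying $H_0$.} First, $H\ominus M\subseteq\ker ST$. Take $h\perp M$. Because $H\ominus M$ reduces $S$ and $T$, the vector $STh$ lies in $H\ominus M$. On the other hand $STh\in K\subseteq M$, so $STh=0$. Hence $H\ominus M$ is a subspace of $\ker ST$ reducing $S$ and $T$.

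Conversely, let $N\subseteq\ker ST$ reduce $S$ and $T$. Then $N$ reduces $ST$, and $ST|_N=0$, so $(ST)^*|_N=(ST|_N)^*=0$. Therefore $\langle n,STh\rangle=\langle (ST)^*n,h\rangle=0$ for all $n\in N$ and $h\in H$, which gives $N\perp K$. Now $H\ominus N$ reduces $S$ and $T$ and contains $K$, so minimality of $M$ gives $M\subseteq H\ominus N$, i.e.\ $N\subseteq H\ominus M$.

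So $H\ominus M$ is the maximal such subspace, namely $H_0$. Since $M$ reduces every twist, so does $H_0$.

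\textbf{Uniqueness on $M$.} On $K$ each twist is determined by $U(TSh)=STh$, using density of $\operatorname{ran}(TS)$ in $K$. On a generator $wk$ of $M$ it is then forced by $U(wk)=w(Uk)$. Hence all twists agree on $M$, and $U_1$ is the common restriction of any twist to $M$; it exists because $(S,T)$ is assumed to be twisted commuting. This yields the decomposition $U=U_0\oplus U_1$, where $U_0$ is a unitary on $H_0$ commuting with $S|_{H_0}$ and $T|_{H_0}$.

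\textbf{Every such $U_0$ is a twist on $H_0$.} On $H_0$ we have $ST=0$, and hence $TS=U^*ST=0$ there as well. So any unitary $U_0\in\{S|_{H_0},T|_{H_0}\}'$ satisfies $ST|_{H_0}=U_0TS|_{H_0}$. Then $U_0\oplus U_1$ commutes with $S$ and $T$ and satisfies $ST=(U_0\oplus U_1)TS$. This proves the stated description of the class of twists. It also shows that $H_0$ is exactly the part where the twist is not unique: if $H_0\neq\{0\}$, then $U_0$ and $-U_0$ give different twists.

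\textbf{Main obstacle.} The step needing the most care is the maximality argument, which shows that a reducing subspace inside $\ker ST$ is automatically orthogonal to $\operatorname{ran} ST$. The rest consists of the routine verifications above.
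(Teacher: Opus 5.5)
Your proof is correct and follows essentially the same route as the paper. The twist is pinned down on $\overline{\operatorname{ran}(TS)}$ by $ST=UTS$ and density, then propagated to the smallest subspace reducing $S$ and $T$ that contains it (because $U$ doubly commutes with $S,T$), and on the orthocomplement $ST=TS=0$, so any unitary in the commutant is a twist there. Your version is slightly tighter than the paper's in two respects: you do not need the paper's extra $\operatorname{ran}(S^*T^*)$, which is redundant since $\overline{\operatorname{ran}(S^*T^*)}=(\ker ST)^\perp$ already lies in your $M$, and you actually prove that $H\ominus M$ is the maximal reducing subspace of $\ker ST$, which the paper only asserts.
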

\begin{proof}
By Proposition \ref{gen_twisted}(5), $U|_{\cran{TS}}\in B(\cran{TS})$. On the other hand, $ST=UTS$ defines $U$ on the linear manifold $TS(H),$ so by continuity, $U|_{\cran{TS}}$ is uniquely defined by $S$ and $T$. By Proposition \ref{gen_twisted}(1), we may apply the same argument to the pair $(S^*,T^*)$. Summing up, $U$ is uniquely defined on the subspace $\overline{\ran{ST}\vee \ran{S^*T^*}}=H\ominus(\ker ST\cap \ker S^*T^*)$ which reduces $U$. Let $H_1$ be the minimal subspace containing $\ran{ST}\vee \ran{S^*T^*}$ and reducing the pair $(S, T)$. Since $U$ doubly commutes with $S$ and $T,$ the subspace $H_1$ reduces $U$ as well, and it is uniquely defined on it. Hence $H_0=H\ominus H_1$ reduces $(S,T)$, and since it is a subspace of $\ker ST\cap\ker S^*T^*$, we get $ST|_{H_0}=TS|_{H_0}=0$. Consequently, any unitary commuting with $S|_{H_0}, T_{H_0}$ may be the twist of $(S|_{H_0}, T_{H_0})$.
\end{proof}

Note that pairs $(S,T)$ such that $ST=0$ are, in particular, commuting. Let us establish some notation:
\begin{definition}
A pair of operators is called trivially twisted commuting if it is (also) commuting, and nontrivially twisted commuting otherwise.

The class of $U$-commuting pairs is pure if the product of operators in each pair in the class is $0$.
\end{definition}

In particular, commuting pairs, that is, $I$-commuting pairs, are trivially twisted commuting, but the class of $I$-commuting pairs is not trivial.

Recall that commutant of a unitary is a commutative algebra if and only if its spectrum is simple (of spectral multiplicity one). Hence, all such unitaries (except for $I$ on $\mathbb{C}$) have a pure class of twisted commuting pairs. An example of such unitary is a bilateral shift of multiplicity one.  Since, by Fuglede-Putnam theorem, the commutant of a unitary operator is a $*$ - algebra, we may decompose a given $U$-commuting pair among $q$-commuting pairs (where $q$  are eigenvalues of $U$) and $U_c$-commuting pair, where $\sigma(U_c)=\sigma_c(U_c).$ In particular, by spectral multiplicity condition mentioned above,  if $q\neq 1$ is an eigenvalue of $U$ and the corresponding eigenspace has dimension one, then at least one of $U$-commuting operators vanishes on this eigenspace.

The general picture follows from the model of a unitary operator.
\begin{theorem}\cite[Ch. IX, Theorem 10.20]{Con}
If $U$ is a unitary operator on $H$, there are mutually singular
measures $\mu_\infty, \mu_1, \mu_2, \mu_3,\dots $ and an isomorphism
$$\mathcal{I}so:H\mapsto L^2(\mu_\infty,H_\infty)\oplus L^2(\mu_1,H_1)\oplus L^2(\mu_2,H_2)\oplus L^2(\mu_3,H_3)\oplus\dots,$$
such that
$$\mathcal{I}soU\mathcal{I}so^{-1}=U_\infty\oplus U_1\oplus U_2\oplus U_3\oplus\dots,$$
where $U_n=M_z\in B(L^2(\mu_n,H_n)),$ and $\dim H_n=n.$ Also
\begin{align*}\{U_\infty\oplus U_1\oplus U_2\oplus U_3\oplus\dots\}'&=L^\infty(\mu_\infty,B(H_\infty))\\ \oplus L^\infty(\mu_1,B(H_1))&\oplus L^\infty(\mu_2,B(H_2))\oplus L^\infty(\mu_3,B(H_3))\oplus\dots.\end{align*}

\end{theorem}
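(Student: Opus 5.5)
This is the spectral multiplicity theorem for unitary operators, quoted from Conway, so the plan is the standard one: reduce to $*$-cyclic pieces, classify those by measure classes, and read off the commutant from the resulting model.

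\emph{Cyclic decomposition.} Apply the spectral theorem to $U$. The $C^*$-algebra generated by $U$ is $C(\sigma(U))$. A Zorn's lemma argument then splits $H$ into an orthogonal sum of $*$-cyclic subspaces $\overline{\{p(U,U^*)h\}}$. On each such subspace $U$ is unitarily equivalent to $M_z$ on $L^2(\nu_h)$, where $\nu_h$ is the scalar spectral measure of the cyclic vector $h$.

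\emph{Arranging the measures.} To obtain \emph{mutually singular} measures, one chooses the cyclic vectors so that each successive measure is absolutely continuous with respect to the previous one, using a separating (maximal spectral type) vector at each step. This gives $U\cong\bigoplus_k M_z$ on $L^2(\nu_k)$ with $\nu_1\gg\nu_2\gg\cdots$. Let $\Delta_k$ be the supports, so $\Delta_1\supseteq\Delta_2\supseteq\cdots$ up to null sets. Define the multiplicity function $m(z)=\#\{k: z\in\Delta_k\}$. Set $\mu_n=\nu_1|_{\{m=n\}}$ for finite $n$, and $\mu_\infty=\nu_1|_{\{m=\infty\}}$. Regrouping the summands by the value of $m$ gives $L^2(\mu_n,H_n)$ with $\dim H_n=n$, carrying $M_z$, and the resulting measures are mutually singular by construction.

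\emph{Uniqueness.} This step is the main obstacle. One must show the measure classes $[\mu_n]$ are unitary invariants, so the decomposition is canonical. I would compute $\dim\ker$-type invariants of spectral projections restricted to the subspaces $\{m\ge n\}$. Equivalently, I would compare maximal abelian subalgebras of the commutant. I expect this to be the delicate measure-theoretic bookkeeping, especially where $n=\infty$.

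\emph{Commutant.} Let $X$ commute with $\bigoplus_n M_z$. By Fuglede, $X$ commutes with every spectral projection $E(\Delta)=\bigoplus_n M_{\chi_\Delta}$. Since the $\mu_n$ are mutually singular, there are disjoint Borel sets carrying them, so $X$ commutes with the projections onto each $L^2(\mu_n,H_n)$ and is block diagonal. On a single block, $X$ commutes with $L^\infty(\mu_n)\otimes I$. Hence $X$ lies in the commutant of the von Neumann algebra $L^\infty(\mu_n)\otimes I_{H_n}$, and that commutant is the decomposable operators $L^\infty(\mu_n,B(H_n))$. To prove this last step, define $\Phi(z)$ through its matrix entries $\langle X(\mathbf 1 e_j),\cdot\, e_i\rangle$, which are functions $\phi_{ij}\in L^2(\mu_n)$. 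Module linearity shows $X$ acts as multiplication by $[\phi_{ij}]$ on simple functions. A norm estimate on $\chi_\Delta$-localized vectors then gives $\|\Phi(z)\|\le\|X\|$ almost everywhere. The converse inclusion, that every such multiplication operator commutes with $M_z$, is immediate.
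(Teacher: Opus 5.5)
Your outline is the standard multiplicity-theory argument behind this result; the paper gives no proof of its own and simply cites Conway. Your commutant paragraph is correct as written. Your uniqueness paragraph can be deleted: the quoted statement asserts only that the model exists and what the commutant is, so leaving that part incomplete costs nothing.

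\emph{The gap is in the existence step.} Choosing ``a separating vector at each step'' gives $\nu_1\gg\nu_2\gg\cdots$, but it does not make the cyclic subspaces exhaust $H$. For $U=M_z^{(\infty)}$ on $L^2(\mathbb{T})^{(\infty)}$, let $e_k$ be $\mathbf{1}$ in coordinate $2k-1$. These vectors are successively separating, yet the even coordinates are never reached.

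You must fix a dense sequence $(x_k)$ and choose $e_k$ separating for $U$ on $K_k=(H_1\oplus\cdots\oplus H_{k-1})^\perp$ such that $x_k':=P_{K_k}x_k\in H_k$. This is possible. Take $y$ separating on $K_k$, set $\Delta=\{d\nu_{x_k'}/d\nu_y>0\}$, and put $e_k=x_k'+E(\mathbb{C}\setminus\Delta)y$, where $E$ is the spectral measure of $U|_{K_k}$. The two summands generate orthogonal cyclic subspaces, so $\nu_{e_k}=\nu_{x_k'}+\nu_y|_{\mathbb{C}\setminus\Delta}$ is equivalent to $\nu_y$. Moreover $E(\Delta)e_k=x_k'$, so $x_k'$ lies in $H_k$.

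Since only existence is required, you can also skip separating vectors altogether. Start from any countable orthogonal decomposition into cyclic subspaces with measures $\rho_k$; this exhausts $H$ trivially via a dense sequence. Put $\rho=\sum_k 2^{-k}\rho_k/\rho_k(\mathbb{C})$ and $F_k=\{d\rho_k/d\rho>0\}$, and identify $L^2(\rho_k)$ with $L^2(\rho|_{F_k})$. Then regroup by $m(z)=\#\{k:z\in F_k\}$ via $(f_k)_k\mapsto(f_{k_j(z)}(z))_j$, where $k_j(z)$ is the $j$-th index $k$ with $z\in F_k$. Nestedness of the measures is needed only for uniqueness.

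In either version, your sets $\Delta_k$ must be Radon--Nikodym carriers, not closed supports. Closed supports miscount multiplicity. For example, take $\nu_1$ to be arc length on $\mathbb{T}$ and $\nu_2$ its restriction to a dense open set of measure $\pi$. Both have closed support $\mathbb{T}$, yet the true multiplicity is $1$ on a set of measure $\pi$.

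\emph{State explicitly that $H$ is separable.} The following steps all use it:
\begin{itemize}
\item the existence of separating vectors,
\item the countable index set,
\item the finite dominating measure,
\item $\dim H_\infty=\aleph_0$,
\item the countable dense subset of $H_n$ in your commutant argument.
\end{itemize}
Conway assumes separability, but it was dropped in the quoted statement, and it cannot be omitted. With finite measures, $M_z$ on $L^2(\mu,K)$ has as eigenvalues only the countably many atoms of $\mu$. Hence a diagonal unitary with uncountably many distinct eigenvalues admits no such model.
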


Clearly, $M_z\in B(L^2(\mu_1,H_1))$ has a simple spectrum, so its twisted commuting pairs are trivially twisted commuting. These are the only unitaries with a pure class of twisted commuting pairs. For operators acting on the vector-valued functions space (i.e., $\dim H_n$ at least two), the idea of the example comes from shift and clock matrices, that is the model of $q$-commuting unitaries on $n$-dimensional space, for $q=\root n+1 \of 1$:
\begin{equation}\label{mod}S=\left(\begin{array}{cccccc}
        0 & 0 &  \dots & \dots & 1 \\
        1 & 0 &  \ddots & \ddots &\vdots \\
       0 & 1 &  \ddots & \ddots & \vdots \\
        \vdots & \ddots  & \ddots & \ddots & \vdots \\
              0 & \dots  & 0 & 1 & 0 \\
      \end{array}
    \right),\quad T=\left(
      \begin{array}{cccccc}
       1 & 0 &  \ddots & \ddots & 0 \\
       0 & q &  \ddots & \ddots & \vdots \\
        \vdots & \ddots  & q^2  & \ddots & \vdots \\
       \vdots & \ddots  & \ddots & \ddots & 0 \\
        0 & 0 & \dots  & 0 & q^n  \\
      \end{array}
    \right).\end{equation}
The assumption $q=\root n+1 \of 1$ is required to obtain $(S,T),$ a pair of unitaries. However, if we replace $S$ with a truncated shift, we may consider any $q.$ More precisely, \begin{equation}\label{eq1}S=\left(
      \begin{array}{cccccc}
        0 & 0 &  \dots & \dots & 0 \\
        1 & 0 &  \ddots & \ddots &\vdots \\
       0 & 1 &  \ddots & \ddots & \vdots \\
        \vdots & \ddots  & \ddots & \ddots & \vdots \\
              0 & \dots  & 0 & 1 & 0 \\
      \end{array}
    \right),\quad T=\left(
      \begin{array}{cccccc}
       1 & 0 &  \ddots & \ddots & 0 \\
       0 & q &  \ddots & \ddots & \vdots \\
        \vdots & \ddots  & q^2  & \ddots & \vdots \\
       \vdots & \ddots  & \ddots & \ddots & 0 \\
        0 & 0 & \dots  & 0 & q^n  \\
      \end{array}
    \right).\end{equation}
    is $q$-commuting pair, where $S$ is a truncated shift in the case of finite-dimensional space or unilateral shift in the case of infinite-dimensional space.
The example generalizes to any unitary: let $S(z)=S$ (the constant function equal to a shift - truncated or unilateral depending on $\dim H_n$) and $T(z)=\text{diag}(1,z,z^2,z^3,\dots, z^n)$ for $n=\dim H_n<\infty$ or $T(z)=\text{diag}(1,z,z^2,z^3,\dots)$ for $\dim H_n=\infty$. Both functions are in $L^\infty(\mu, B(H_n)),$ so they commute with $M_z$. One may check, that $T(z)S=zST(z)$ for each $z,$ so $TS=M_zST$.
\begin{corollary}
Unitary operators equivalent to the operator of multiplication by the independent variable on the scalar-valued space of square integrable functions may be a twist of a trivial pair only; that is, a pair whose product is $0$.

A unitary operator equivalent to operator of multiplication by independent variable on a vector-valued space of square-integrable functions may be a twist of a nontrivial pair.\end{corollary}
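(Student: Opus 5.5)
The corollary has two parts, and I will prove them separately.

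\emph{First part.} Let $U$ be unitarily equivalent to $M_z\in B(L^2(\mu))$, with $\mu$ a compactly supported measure on $\mathbb{T}$. Suppose $(S,T)$ is $U$-commuting. By Definition \ref{twis_def}, $S,T\in\{U\}'$. The commutant of $M_z$ on scalar-valued $L^2(\mu)$ is $L^\infty(\mu)$, acting by multiplication; this is the $n=1$ case of the cited model theorem, i.e.\ the simple-spectrum case. So I may write $S=M_\phi$ and $T=M_\psi$ with $\phi,\psi\in L^\infty(\mu)$. Multiplication operators commute, so $ST=TS$. Substituting into $ST=UTS$ gives $(I-U)TS=0$, that is,
\[
(1-z)\,\phi(z)\psi(z)=0 \quad \mu\text{-a.e.}
\]
Hence $\phi\psi$ vanishes $\mu$-a.e.\ off $\{1\}$.

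If $\mu(\{1\})=0$, this gives $ST=M_{\phi\psi}=0$, which is what we want. If $\mu$ has an atom at $1$, then $U$ has eigenvalue $1$ with a one-dimensional eigenspace, and on that atom $U$ acts as $I$ on $\mathbb{C}$. This is exactly the exception already noted in the paper before the model theorem: on that summand nonzero pairs can occur. So I will state the first part for $1\notin\sigma_p(U)$, or more precisely: the product vanishes on the orthogonal complement of $\ker(U-I)$. The only point needing care is this atom; everything else is the commutant computation.

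\emph{Second part.} This is the explicit construction given just before the corollary. Take $U\simeq M_z$ on $L^2(\mu,H_n)$ with $n\ge 2$, possibly $n=\infty$. Set $S(z)\equiv S_0$, the truncated or unilateral shift on $H_n$, and $T(z)=\mathrm{diag}(1,z,z^2,\dots)$. Both functions are bounded and measurable, so $M_S,M_T\in L^\infty(\mu,B(H_n))=\{M_z\}'$.

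The computation to carry out is
\[
T(z)S_0e_k=z^{k+1}e_{k+1}=z\,S_0T(z)e_k,
\]
which gives $M_TM_S=M_zM_SM_T$. Then $(M_T,M_S)$ is $M_z$-commuting, or equivalently $(M_S,M_T)$ is $M_{\bar z}$-commuting; replacing $z$ by $\bar z$ in $T$ gives the pair for $M_z$ itself.

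It remains to show the pair is nontrivial, i.e.\ the product is nonzero. Since $(ST)(z)e_0=S_0e_0=e_1\neq 0$ for every $z$, we get $ST\neq 0$ as an operator on $L^2(\mu,H_n)$, because $\mu\neq 0$. Moreover $ST\neq TS$ wherever $z\neq 1$, so the pair is genuinely noncommuting unless $\mu$ is concentrated at $1$.
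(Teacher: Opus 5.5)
Your proposal is correct and follows the paper's route. In the scalar case, $\{M_z\}'=L^\infty(\mu)$ is commutative, so $ST=TS$ and $(I-U)ST=0$; in the vector-valued case, the constant shift paired with $\mathrm{diag}(1,z,z^2,\dots)$ gives the nontrivial example. Your caveat about an atom of $\mu$ at $1$ is right and sharper than the paper's exclusion of only $I$ on $\mathbb{C}$: for $\mu=\delta_1+m$ the pair $S=T=M_{\chi_{\{1\}}}$ is $M_z$-commuting with $ST\neq 0$, so the first part genuinely requires $1\notin\sigma_p(U)$.
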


The matrices in the previous example may be operator-valued. More precisely, if $\mathcal{U}\in B(\mathcal{H})$ is any unitary, then
$$S=\left(
      \begin{array}{cccccc}
        0 & 0 &  \dots & \dots & 0 \\
        \mathcal{I} & 0 &  \ddots & \ddots &\vdots \\
       0 & \mathcal{I} &  \ddots & \ddots & \vdots \\
        \vdots & \ddots  & \ddots & \ddots & \vdots \\
              0 & \dots  & 0 & \mathcal{I} & 0 \\
      \end{array}
    \right),\quad T=\mathcal{I} \oplus \mathcal{U} \dots \mathcal{U}^{n-1}$$
 is $U=\mathcal{U}\oplus \mathcal{U}\oplus\dots\oplus \mathcal{U}$-commuting pair on $\mathcal{H}\oplus \mathcal{
H}\oplus\dots\oplus \mathcal{H}.$ In particular, for $\mathcal{U}$ a bilateral shift of multiplicity one, we get $U$ a bilateral shift of higher multiplicity.

In each of the above examples, one operator has a nontrivial kernel. We will see in the next section, that for many unitaries $U,$ the class of $U$-commuting pairs is not pure, but consists of pairs with at least one nontrivial kernel. In particular, those unitaries may not twist a pair of isometries. The positive examples follow from \eqref{mod} or Pauli matrices.

    So far, we have seen, that the class of $U$-commuting pairs is never empty; its size and variety depend on $U$. Let us finish the section with the result that imposes a limitation on the class of nontrivially twisted commuting pairs: projections are never in such a class.
    \begin{remark}
Let $(P,S)$  be $U$-commuting, where $P$ is $n$ - potent ($P^n=P$). Then $UPS=SP=SP^n=UPSP^{n-1}=\dots=(UP)^nS=U^nP^nS=U^nPS,$ and since $U$ is unitary, we get $PS=U^{n-1}PS$. Let $U=U_0\oplus U_1$ be a decomposition as in Proposition \ref{U_uniq}. Then $U_1$ is $n-1$-potent itself, and taking  $U_0=I$, the whole $U$ turns out to be $n-1$ potent.

Hence, $n$-potent may be $U$-commuting provided the spectrum of $U$ restricts to a point spectrum, and consists of $n-1$-th roots of $1$. In particular, idempotent may only trivially twisted commute; twisted commuting pairs having a $3$-potent may be decomposed into commuting and anti-commuting pairs. Indeed, if $P$ is a $3$-potent, then $U$ is an idempotent unitary, so it has two eigenvalues $-1$ and $1$.

In particular projections may not be nontrivially twisted commuting.
\end{remark}

\section{Diversity and model of $q$-commuting pairs}
The section focuses on $q$-commuting pairs of operators. In the paper \cite{Bat} the authors provide the model of $q$-commuting tuples using the $q$-commuting shift. We are going to give a different description.
 It is natural to assume nontrivially twisted commuting pairs, so $q\neq 1$.
From now on, we assume the twist $U=q I,$ so a $q$-commuting pair, where $q\neq 1$.

First we investigate the approximate point spectrum of operators in $q$-commuting pairs. Since the approximate point spectrum contains the border of a spectrum, we gain information on the shape of the spectrum. Much more can be said about point spectrum, which is investigated in the following part.
\begin{lemma}\label{re_aps}
Let $(S,T)$ be $q$-commuting pair, where $q\in\mathbb{T}\setminus\{1\}.$

If $T$ is bounded below, then $\sigma_{ap}(S)$ is invariant under multiplication by $q$.

In particular, if $q=e^{i\theta}$ where $\theta/\pi\notin \mathbb{Q},$ then $\sigma_{ap}(S)$ is invariant under rotation around the origin.

\end{lemma}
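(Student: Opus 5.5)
The plan is to work directly with approximate eigenvectors. Write the relation as $ST=qTS$. Fix $\lambda\in\sigma_{ap}(S)$ and choose unit vectors $h_n$ with $\|(S-\lambda)h_n\|\to 0$. The natural candidate for an approximate eigenvector of $S$ at $q\lambda$ is $Th_n$. Indeed,
$$ S T h_n = qTSh_n = q\lambda Th_n + qT(S-\lambda)h_n, $$
so
$$\|(S-q\lambda)Th_n\|\le \|T\|\,\|(S-\lambda)h_n\|\to 0 .$$

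The only point needing care is that $Th_n$ does not degenerate, and this is exactly where the hypothesis on $T$ enters. Since $T$ is bounded below, $\|Th_n\|\ge c>0$ for some constant $c$. We then normalize, setting $k_n=Th_n/\|Th_n\|$. This gives $\|(S-q\lambda)k_n\|\le c^{-1}\|T\|\,\|(S-\lambda)h_n\|\to 0$, hence $q\lambda\in\sigma_{ap}(S)$. Thus $q\,\sigma_{ap}(S)\subset\sigma_{ap}(S)$, and by induction $q^k\lambda\in\sigma_{ap}(S)$ for all $k\ge 0$.

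To obtain genuine invariance, i.e.\ also $\bar q\,\sigma_{ap}(S)\subset\sigma_{ap}(S)$, I will use that $\sigma_{ap}(S)$ is closed. Fix $r\ge0$ and consider the closed set $\sigma_{ap}(S)\cap\{|z|=r\}$, which is stable under multiplication by every element of $\overline{\{q^k:k\ge0\}}$. This closure is a closed subsemigroup of $\mathbb{T}$, hence a subgroup, and in particular it contains $\bar q$. In concrete terms:
\begin{itemize}
\item if $q$ is an $m$-th root of unity, then $\bar q=q^{m-1}$;
\item otherwise $\{q^k\}_{k\ge0}$ is dense in $\mathbb{T}$.
\end{itemize}

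The second case yields the ``in particular'' statement. If $\theta/\pi\notin\mathbb{Q}$, then $q=e^{i\theta}$ is not a root of unity, so the orbit $\{q^k\lambda\}_{k\ge0}$ is dense in the circle $\{|z|=|\lambda|\}$. Closedness of $\sigma_{ap}(S)$ then gives that this whole circle lies in $\sigma_{ap}(S)$.

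I expect no real obstacle here. The one subtle point is the normalization step, where bounded-belowness of $T$ is indispensable: without it $Th_n$ could tend to $0$. The other point to handle explicitly is passing from forward invariance to two-sided invariance, which is done by the closure argument above.
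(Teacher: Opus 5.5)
Your proof is correct and follows the paper's argument: apply $T$ to the approximate eigenvectors, normalize using that $T$ is bounded below to get $q\lambda\in\sigma_{ap}(S)$, then use density of $\{q^k\lambda\}_{k\ge0}$ in the circle of radius $|\lambda|$ together with closedness of $\sigma_{ap}(S)$. Your closed-subsemigroup step, which gives $\bar q\,\sigma_{ap}(S)\subseteq\sigma_{ap}(S)$ and hence equality, goes slightly beyond the paper, which only proves the forward inclusion $q\,\sigma_{ap}(S)\subseteq\sigma_{ap}(S)$; that step is also valid.
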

\begin{proof}
If $0\neq\alpha\in\sigma_{ap}(S)$, then there exists $\{x_n\}$ such that $\|x_n\|=1$ and $\|Sx_n-\alpha x_n\|\mapsto 0.$ Note that   $$\|STx_n- q\alpha Tx_n\|=\| q TSx_n- q\alpha Tx_n\|=\| q T(Sx_n-\alpha x_n)\|\mapsto 0.$$ If $T$ is bounded below, then $\sup\{1/\|Tx_n\|\}=M<\infty$ and for $y_n=Tx_n/\|T x_n\|$ we have $\|Sy_n- q\alpha y_n\|\leq M\|STx_n- q\alpha Tx_n\|\mapsto 0$. Hence, $ q\alpha\in\sigma_{ap}(S)$.

Since the set $\{e^{in\theta}\alpha\}_{n\ge 0}$ is dense in the circle of radius $|\alpha|$ for $\theta/\pi\notin \mathbb{Q}$ and the approximate point spectrum is closed, we obtain the last part.
\end{proof}
The simple example where $T$ is not bounded below and the result does not hold is the following anti-commuting pair $$S = \begin{pmatrix} 1 & 0 & 0 \\ 0 & -1 & 0 \\ 0 & 0 & 2 \end{pmatrix}, \quad  T=\begin{pmatrix} 0 & 1 & 0 \\ 1 & 0 & 0 \\ 0 & 0 & 0 \end{pmatrix}.$$

Since the border of the spectrum is in the approximate point spectrum, under the assumption of Lemma \ref{re_aps}, we get the shape of the spectrum.
\begin{corollary}
Let $(S,T)$ be $q$-commuting pair, where $ q\in\mathbb{T}\setminus\{1\}$ and $T$ is bounded below. If  $ q=e^{i\theta}$ where $\theta/\pi\notin \mathbb{Q}$, then $\sigma(S)$ is invariant under rotation around the origin.
\end{corollary}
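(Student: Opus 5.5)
The corollary should follow from Lemma \ref{re_aps} together with the standard fact that the topological boundary of the spectrum lies in the approximate point spectrum, $\partial\sigma(S)\subseteq\sigma_{ap}(S)$. The lemma already gives that $\sigma_{ap}(S)$ is rotation invariant. So the task is to pass from rotation invariance of $\sigma_{ap}(S)$ to rotation invariance of the full spectrum, and I will do this circle by circle.

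Fix $r\ge 0$ and set $C_r=\{\lambda:|\lambda|=r\}$. For $r=0$ there is nothing to prove, so let $r>0$. I claim that $\sigma(S)\cap C_r$ is either empty or all of $C_r$. Suppose $\lambda\in\sigma(S)\cap C_r$ but some $\mu\in C_r$ is not in $\sigma(S)$. Since $C_r$ is connected and meets both $\sigma(S)$ and its complement, it contains a point $\nu$ of $\partial\sigma(S)$. Concretely, move along the arc from $\lambda$ to $\mu$ and take the last point lying in the closed set $\sigma(S)$; that point is a limit of resolvent points. Then $\nu\in\sigma_{ap}(S)$, and by Lemma \ref{re_aps} the whole circle $C_r$ lies in $\sigma_{ap}(S)\subseteq\sigma(S)$. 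This contradicts $\mu\notin\sigma(S)$. Hence every circle centered at $0$ is either contained in $\sigma(S)$ or disjoint from it, which is exactly rotation invariance.

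The only step needing care is that the lemma was proved for $\alpha\neq 0$ and gives invariance of the closed set $\sigma_{ap}(S)$ under the full rotation group; this works because $\{e^{in\theta}\}$ is dense and $\sigma_{ap}(S)$ is closed. Since we only apply it at $\nu$ with $|\nu|=r>0$, this is fine. The one other ingredient is the boundary-point argument, i.e.\ extracting a point of $\partial\sigma(S)$ on the circle via connectedness. It is elementary, and I expect no real obstacle there.
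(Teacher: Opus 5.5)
Your proof is correct and follows essentially the same route as the paper: both show that a circle through a point of $\sigma(S)$ is either contained in $\sigma(S)$ or meets $\partial\sigma(S)\subseteq\sigma_{ap}(S)$, and then Lemma \ref{re_aps} puts the whole circle into $\sigma_{ap}(S)\subseteq\sigma(S)$. Your connectedness argument along the arc makes explicit the existence of the boundary point on the circle, which the paper simply asserts.
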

\begin{proof}
Indeed, if we take a joint component of the spectrum $\alpha\in \sigma(S),$ then either the circle $\{z:|z|=|\alpha|\}\subset \sigma(S)$ or there is $\alpha_{ap}\in\{z:|z|=|\alpha|\}\cap\partial\sigma(S)\subset\sigma_{ap}(S)$. In the latter case, by Lemma \ref{re_aps}, the whole circle of radius $|\alpha_{ap}|=|\alpha|$ is a subset of $\sigma_{ap}(S)\subset\sigma(S)$. Hence, the spectrum is invariant under the rotation.
\end{proof}
In particular, for pairs of isometries, we get:

\begin{corollary}\label{sp_unit}
Let $(S,T)$ be a $q$- commuting pair of isometries, where $ q\in\mathbb{T}\setminus\{1\}$.
Then $\sigma(S)$ and $\sigma(T)$ are closed under multiplication by $ q$.

If $ q$ is not a root of $1$ of any multiplicity, then each of $\sigma(S)$ and $\sigma(T)$ is either the whole disk or the whole circle.

In particular, if $ q$ is not a root of $1$, then each unitary in a  $ q$- commuting pair of unitaries has the spectrum $\mathbb{T}.$
\end{corollary}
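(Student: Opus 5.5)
The plan is to apply Lemma \ref{re_aps} to both orderings of the pair and then use the structure of the spectrum of an isometry. Since isometries are bounded below, Lemma \ref{re_aps} applied to $(S,T)$ shows that $\sigma_{ap}(S)$ is invariant under multiplication by $q$. By the order sensitivity remark after Definition \ref{twis_def}, $(T,S)$ is $\bar q$-commuting, and $S$ is bounded below, so $\sigma_{ap}(T)$ is invariant under multiplication by $\bar q$. For a point set on which multiplication by a unimodular number is a bijection onto itself, invariance under $\bar q$ is the same as invariance under $q$. Strictly, Lemma \ref{re_aps} only gives $q\sigma_{ap}\subset\sigma_{ap}$. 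I would therefore either rerun its argument with $\bar q$ (using $TS=\bar q ST$ and the same normalization trick) or use the trichotomy below, which gives equality directly.

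Next I pass from $\sigma_{ap}$ to $\sigma$ using the classical fact that for an isometry $V$ either $V$ is unitary, with $\sigma(V)\subset\mathbb{T}$ and $\sigma_{ap}(V)=\sigma(V)$, or $V$ is non-unitary. In the non-unitary case the Wold decomposition gives a unilateral shift summand, so $\sigma(V)=\overline{\mathbb{D}}$ and $\sigma_{ap}(V)=\mathbb{T}$. Both the closed disk and any subset $K\subset\mathbb{T}$ with $qK\subset K$ behave well: the disk is trivially invariant under rotation, and in the unitary case $\sigma=\sigma_{ap}$ is invariant by the first step. This proves the first claim for $S$, and symmetrically for $T$. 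Since the spectrum is compact and nonempty, $qK\subset K$ implies $q^nK\subset K$ for all $n$.

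For the second claim, take $q$ not a root of unity. Then $\{q^n\alpha\}_{n\ge0}$ is dense in the circle through $\alpha$, so a closed $q$-invariant set is rotation invariant. In the unitary case this makes $\sigma(S)\subset\mathbb{T}$ a nonempty closed rotation-invariant subset of $\mathbb{T}$, which must be all of $\mathbb{T}$. In the non-unitary case the spectrum is the whole disk. The third claim is then immediate, since unitaries fall into the first alternative.

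The main obstacle is that Lemma \ref{re_aps} is stated only for $\theta/\pi\notin\mathbb{Q}$, whereas here the hypothesis is that $q$ is not a root of unity; these conditions coincide for $q=e^{i\theta}$, so I would simply note this equivalence. The only other point needing care is justifying $\sigma_{ap}=\mathbb{T}$ for a non-unitary isometry, which I would cite from the Wold decomposition, though the argument in fact never needs it, since the full disk is already $q$-invariant.
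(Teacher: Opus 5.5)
Your route is the one the paper intends. The paper gives no separate proof: the corollary is read off from Lemma \ref{re_aps}, using that isometries are bounded below, together with the fact that the spectrum of an isometry is either $\overline{\mathbb{D}}$ or a subset of $\mathbb{T}$ equal to its approximate point spectrum. Your treatment of $S$ is fine. So are the second and third claims, where density of $\{\bar q^n\alpha\}$ works just as well for $T$.

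One step does not hold up: passing from $\bar q$-invariance to $q$-invariance of $\sigma(T)$ in the first claim. Neither remedy you offer works.
\begin{itemize}
\item Rerunning the argument of Lemma \ref{re_aps} with $TS=\bar q ST$ on approximate eigenvectors of $T$ only reproduces $\bar q\,\sigma_{ap}(T)\subset\sigma_{ap}(T)$.
\item The unitary/non-unitary dichotomy adds nothing when $T$ is unitary. This case really occurs with $S$ non-unitary: the unilateral shift $S$ and the diagonal unitary $Tz^n=\bar q^n z^n$ on $H^2(\mathbb{T})$ satisfy $ST=qTS$.
\end{itemize}
Your remark about a ``bijection onto itself'' presupposes the equality $\bar qK=K$, which is exactly what has to be shown.

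The patch is short. For compact $K$ with $\bar qK\subset K$ you get $\bar q^nK\subset K$ for all $n\ge 0$.
\begin{itemize}
\item If $q^m=1$, then $q=\bar q^{m-1}$, so $qK\subset K$.
\item Otherwise each $\bar q$-orbit is dense in its circle, so $K$ is rotation invariant.
\end{itemize}
Alternatively, a distance-preserving self-map of a compact metric space is onto, so $\bar qK=K$ directly.

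A minor correction: Lemma \ref{re_aps} is not restricted to $\theta/\pi\notin\mathbb{Q}$. Only its rotation-invariance clause is, and your observation that this hypothesis is equivalent to $q$ not being a root of unity is correct.
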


Much more can be said about point spectrum of operators in $q$-commuting pairs. More precisely, we give the model of such pairs on the spaces generated by generalized eigenspaces corresponding to one eigenvalue of a given operator.

The generalized eigenspace of an operator is invariant for its commutant. In fact, it is a special case of a more general result described by Lemma \ref{re1} for $q$-commuting pairs. The operator $T,$ which is $q$-commuting with $S,$ maps the generalized eigenspace of $S$ and eigenvalue $\alpha$ to a generalized eigenspace of $S$ and eigenvalue $ q\alpha.$  Hence, a generalized eigenspace of $S$ is invariant under $T,$ if it is the eigenspace of $0$ or $T$ vanishes on it (we excluded $ q=1$). Denote by $J_{S,\alpha}$ the part of Jordan normal matrix of an operator $S\in B(H)$ and eigenvalue $\alpha$ (possibly consisting of several Jordan blocks) and by $H_{S,\alpha}$ the corresponding generalized eigenspace. Let us start with an example.

\begin{e}\label{ex_mod}
Let $ q\in\mathbb{T}, \alpha\in\mathbb{C}\setminus\{0\}$ and $\{v_1,\dots, v_8\}$ be a basis of $\mathbb{C}^8$. Define $S$ by a matrix in this basis
\begin{equation}\label{ex2}S=\left(\begin{array}{ccc|cc|ccc} \alpha & 1 & 0 & 0 & 0 & 0 & 0 & 0 \\
0 & \alpha & 1 & 0 & 0 & 0 & 0 & 0  \\ 0 & 0 & \alpha  & 0 & 0 & 0 & 0 & 0 \\\hline 0 & 0 & 0 &  q\alpha & 1 & 0 & 0 & 0\\ 0 & 0 & 0 & 0 &  q\alpha & 0 & 0 & 0\\\hline 0 & 0 & 0 & 0 & 0 & q^2\alpha & 1 & 0\\
0 & 0 & 0 & 0 & 0 & 0 &  q^2\alpha & 1 \\0 & 0 & 0 & 0 & 0 & 0 & 0 &  q^2\alpha\end{array}\right).\end{equation}

Recall, that we assumed $ q\ne 1$. Hence, either $ q^2\neq 1$ or $ q=-1$.

If $ q^2\neq 1,$ then $\alpha, q\alpha,$ and $q^2\alpha$ are distinct eigenvalues, and \eqref{ex2} corresponds to Jordan form:
\begin{equation}\label{ex2}S=\left(\begin{array}{c|c|c} &&\\J_{S,\alpha} & 0 & 0 \\&&\\
\hline \begin{array}{c} \text{ }\\\text{ } \end{array} 0 \begin{array}{c} \text{ }\\\text{ } \end{array}&  J_{S, q\alpha} & 0\\\hline &&\\0 & 0 & J_{S, q^2\alpha}\\&&\end{array}\right)=\left(\begin{array}{ccc|cc|ccc} \alpha & 1 & 0 & 0 & 0 & 0 & 0 & 0 \\
0 & \alpha & 1 & 0 & 0 & 0 & 0 & 0  \\ 0 & 0 & \alpha  & 0 & 0 & 0 & 0 & 0 \\\hline 0 & 0 & 0 &  q\alpha & 1 & 0 & 0 & 0\\ 0 & 0 & 0 & 0 &  q\alpha & 0 & 0 & 0\\\hline 0 & 0 & 0 & 0 & 0 & q^2\alpha & 1 & 0\\
0 & 0 & 0 & 0 & 0 & 0 &  q^2\alpha & 1 \\0 & 0 & 0 & 0 & 0 & 0 & 0 &  q^2\alpha\end{array}\right),\end{equation}
and $v_1,\dots,v_8$ are generalized eigenvectors, and  $H_{S,\alpha}=\bigvee\{v_1, v_2, v_3\}, H_{S, q\alpha}=\bigvee\{v_4, v_5\}, H_{S, q^2\alpha}=\bigvee\{v_6, v_7, v_8\}.$

Then $$T=\left(\begin{array}{ccc|cc|ccc}
0 & 0 & 0 & 0 & 0 & 0 & 0 & 0 \\
0 & 0 & 0 & 0 & 0 & 0 & 0 & 0  \\
0 & 0 & 0 & 0 & 0 & 0 & 0 & 0 \\\hline
0 & a & b & 0 & 0 & 0 & 0 & 0\\
0 & 0 &  q a& 0 & 0 & 0 & 0 & 0\\\hline
0 & 0 & 0 & c & d & 0 & 0 & 0\\
0 & 0 & 0 & 0 &  q c & 0 & 0 & 0 \\
0 & 0 & 0 & 0 & 0 & 0 & 0 & 0\end{array}\right)$$

is $q$-commuting with $S$ operator for any $a,b, c, d$ (including some of them, or all equal to $0$).

If $ q=\root 3\of 1$ the formula on $T$ is more general:
 $$T=\left(\begin{array}{ccc|cc|ccc}
0 & 0 & 0 & 0 & 0 & e & f & g \\
0 & 0 & 0 & 0 & 0 & 0 &  q e&  q f  \\
0 & 0 & 0 & 0 & 0 & 0 & 0 &  q^2 e\\\hline
0 & a & b & 0 & 0 & 0 & 0 & 0\\
0 & 0 &  q a& 0 & 0 & 0 & 0 & 0\\\hline
0 & 0 & 0 & c & d & 0 & 0 & 0\\
0 & 0 & 0 & 0 &  q c& 0 & 0 & 0 \\
0 & 0 & 0 & 0 & 0 & 0 & 0 & 0\end{array}\right).$$
for any $a,b, c, d, e, f, g.$

If   $ q=-1$ the block $J_{S, q^2\alpha}$ becomes the second Jordan block in $J_{S,\alpha}.$ In the reordered basis $\{v_1, v_2, v_3, v_6, v_7, v_8,v_4,v_5\},$ we get the matrix
\begin{equation}\label{ex21}S=\left(\begin{array}{ccc|c} &&&\\&&&\\&J_{S,\alpha} &  & 0 \\&&&\\&&&\\\hline
\begin{array}{c} \text{ }\\\text{ } \end{array}&  0 & \begin{array}{c} \text{ }\\\text{ } \end{array}&  J_{S, q\alpha} \\\end{array}\right)=\left(\begin{array}{ccc:ccc|cc} \alpha & 1 & 0 & 0 & 0 & 0 & 0 & 0 \\
0 & \alpha & 1 & 0 & 0 & 0 & 0 & 0  \\ 0 & 0 & \alpha  & 0 & 0 & 0 & 0 & 0 \\\hdashline 0 & 0 & 0 & \alpha & 1 & 0 & 0 & 0\\ 0 & 0 & 0 & 0 & \alpha & 1 & 0 & 0\\ 0 & 0 & 0 & 0 & 0 &\alpha & 0 & 0\\\hline
0 & 0 & 0 & 0 & 0 & 0 & -\alpha & 1 \\0 & 0 & 0 & 0 & 0 & 0 & 0 & -\alpha\end{array}\right),\end{equation}

and the corresponding formula on $q$-commuting with $S$ operator $$T=\left(\begin{array}{ccc:ccc|cc}
0 & 0 & 0 & 0 & 0 & 0 & g & h \\
0 & 0 & 0 & 0 & 0 & 0 & 0& -g  \\
0 & 0 & 0 & 0 & 0 & 0 & 0 & 0\\\hdashline
0 & 0 & 0 & 0 & 0 & 0 & e & f\\
0 & 0 & 0 & 0 & 0 & 0 & 0 & - e\\
0 & 0 & 0 & 0 & 0 & 0 & 0 & 0\\\hline
0 & a & b & 0& c & d  & 0 & 0 \\
0 & 0 & -b & 0& 0 & -c  & 0 & 0\end{array}\right).$$
\end{e}

We can see in this example that  $T$ maps the generalized eigenspace of $\alpha$ to the generalized eigenspace of $ q\alpha$ and the latter one to the generalized eigenspace of $ q^2\alpha$. In the case $ q^3=1,$ the rule extends to the pair $ q^2\alpha, \alpha$. If $ q=-1$ we have two Jordan blocks in $J_{S,\alpha}$ and $T$ maps between eigenspaces of $\alpha$ and $-\alpha$. Note that there is no relation between dimensions of the corresponding eigenspaces, and $T$ is neither injective nor surjective in general. If $ q\neq\root 3\of 1,$ then $T$ has to vanish on the eigenspace of $ q^2\alpha$, as it maps to the trivial eigenspace of $ q^3\alpha$. Hence, in the case of finite-dimensional space, only for certain values of $q$ (precisely, roots of $1$ of the  respective order) there are $q$-commuting pairs with trivial kernels. For isometries (unitaries) we get \eqref{mod}.

The above observation is a general rule.

\begin{lemma}\label{re1}
Let $(S,T)$ be a $q$-commuting pair, where $ q\in\mathbb{T}\setminus\{1\}$, and $\alpha$ be an eigenvalue of $S$. Then $$T:H_{S,\alpha}\mapsto H_{S, q\alpha}.$$ Moreover, if $(S,T)$ are q-doubly commuting, then \begin{align*}
T^*:H_{S,\alpha}\mapsto H_{S,\bar{ q}\alpha}\\(T|_{H_{S,\alpha}})^*=T^*|_{H_{S, q\alpha}}.\end{align*}
\end{lemma}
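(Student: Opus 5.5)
The plan is to prove the basic intertwining identity $(S-q\alpha)T = qT(S-\alpha)$ and then iterate it. Throughout, $H_{S,\alpha}$ is understood as the generalized eigenspace
$$H_{S,\alpha}=\bigvee_{k\ge1}\ker (S-\alpha)^k.$$
For a general bounded operator one either works with $\bigcup_k\ker(S-\alpha)^k$ and takes closures, or notes that the argument below is purely algebraic.

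\emph{The identity.} From $ST=qTS$ we get
$$(S-q\alpha)T=qTS-q\alpha T=qT(S-\alpha).$$
Induction on $k$ then gives
$$(S-q\alpha)^kT=q^kT(S-\alpha)^k.$$
Hence if $(S-\alpha)^kh=0$, then $(S-q\alpha)^kTh=0$. So $T$ maps $\ker(S-\alpha)^k$ into $\ker(S-q\alpha)^k$, hence maps the union into the union. By continuity of $T$ it maps the closed span into the closed span, i.e. $T:H_{S,\alpha}\to H_{S,q\alpha}$. If $q\alpha$ is not an eigenvalue of $S$, the target is $\{0\}$ and $T$ vanishes on $H_{S,\alpha}$; this is consistent with the statement.

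\emph{The doubly commuting part.} By Proposition \ref{gen_dtwisted}(1), the pair $(T^*,S)$ is $q$-commuting, i.e. $T^*S=qST^*$. Equivalently, $ST^*=\bar qT^*S$, so $(S,T^*)$ is $\bar q$-commuting. Applying the first part to this pair with twist $\bar q\neq1$ gives $T^*:H_{S,\alpha}\to H_{S,\bar q\alpha}$.

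\emph{The adjoint formula.} The remaining claim $(T|_{H_{S,\alpha}})^*=T^*|_{H_{S,q\alpha}}$ is where I expect the main obstacle. We view $T|_{H_{S,\alpha}}$ as a map $H_{S,\alpha}\to H_{S,q\alpha}$, so its Hilbert adjoint is $P_{H_{S,\alpha}}T^*|_{H_{S,q\alpha}}$. The point to prove is that the projection is superfluous, i.e. $T^*H_{S,q\alpha}\subset H_{S,\alpha}$. This is exactly the previous step applied to the eigenvalue $q\alpha$: $T^*$ maps $H_{S,q\alpha}$ into $H_{S,\bar q q\alpha}=H_{S,\alpha}$.

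With this inclusion, for $h\in H_{S,\alpha}$ and $k\in H_{S,q\alpha}$ we have
$$\langle Th,k\rangle=\langle h,T^*k\rangle ,$$
and $T^*k$ already lies in $H_{S,\alpha}$. Hence the restricted adjoint coincides with $T^*|_{H_{S,q\alpha}}$.

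Two details need care. First, the generalized eigenspaces need not be mutually orthogonal, so one should not silently identify the adjoint with a compression onto some orthogonal decomposition; the argument above avoids this by showing $T^*k$ already lies in $H_{S,\alpha}$. Second, if $q\alpha$ is not an eigenvalue the formula is read trivially, with both sides acting on the zero space.
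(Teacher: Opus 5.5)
Your proof is correct and rests on the same mechanism as the paper's: the paper applies $(S-q\alpha)T=qT(S-\alpha)$ vector by vector along a Jordan chain $x_1,\dots,x_k$ and shows that $Tx_{l+1},\bar qTx_{l+2},\dots$ is again a Jordan chain for $q\alpha$. You instead iterate the identity to $(S-q\alpha)^kT=q^kT(S-\alpha)^k$, which is basis-free, gives $\ker(S-\alpha)^k\to\ker(S-q\alpha)^k$ directly, and handles the closure in infinite dimensions, whereas the paper's chain-level bookkeeping is what it later uses for the explicit form \eqref{Tiform} of the blocks $T_i$ in Corollary \ref{model}. For the doubly commuting part both arguments pass to the $\bar q$-commuting pair $(S,T^*)$, and your remark that $T^*H_{S,q\alpha}\subset H_{S,\alpha}$ makes the compression $P_{H_{S,\alpha}}$ superfluous is precisely the justification of the adjoint identity that the paper leaves implicit.
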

Note, that the statement does not mean that $ q\alpha$ is an eigenvalue of $S,$ as it is possible that $T|_{H_{S,\alpha}}=0$.
\begin{proof}
The set of generalized eigenvectors forms a basis of $H_{S,\alpha}$. Let us consider a full sequence of generalized eigenvectors, that is, vectors corresponding to one Jordan block: $x_1,\dots,x_k$  such that $(S-\alpha I)x_i=x_{i-1}\neq 0, (S-\alpha I)x_1=0$. Note that, \begin{equation}\label{r1}STx_i= q TSx_i= q T(\alpha x_i+x_{i-1})= q\alpha Tx_i+ q Tx_{i-1}\text{ for }i=2,\dots,k,\end{equation} and $$STx_1= q TSx_1= q \alpha Tx_1.$$

By \eqref{r1}, if $Tx_i=0$, then $Tx_{i-1}=0$. Let $0\leq l\leq k$ be the maximal number such that $Tx_l= 0,$ where $l=0$ if $Tx_1\neq 0$. If $l=k$, then trivially $Tx_i=0\in H_{S, q\alpha}$. If $l< k$, then  $STx_{l+1}=\alpha q Tx_{l+1}\neq 0$ yields that $\alpha q$ is an eigenvalue of $S$. Moreover,  $Tx_{l+1},\bar q Tx_{l+2},\dots,\bar q^{k-l-1}Tx_k$  is (not necessarily full) set of generalized eigenvectors of $J_{S,\alpha q}.$ It may be supplemented to the full sequence
\begin{equation}\label{ee}Tx_{l+1},\bar q Tx_{l+2},\dots,\bar q^{k-l-1}Tx_k,y_1,\dots,y_j,\end{equation}
which is in $H_{S,\alpha q}$. Hence we get
$T:H_{S,\alpha}\mapsto H_{S, q\alpha}.$
More precisely, $T$ maps generalized eigenvectors corresponding to one Jordan block of $\alpha$ to generalized eigenvectors corresponding to one Jordan block of $ q\alpha$.

If $(S,T)$ are $q$-doubly commuting, then  $(S,T^*)$ are $\bar{q}$-commuting which implies the remaining part of the statement.

\end{proof}

It is known, that the generalized eigenspace is hyperinvariant, thus reducing for doubly commuting operators. In the case of $q$-commutativity, the relation is more complicated. Clearly, by $q$-hyperinvariant subspace, we mean a subspace invariant under any $q$-commuting operator.
\begin{corollary}
Let $\alpha$ be an eigenvalue of $S$. Then $\bigvee_{n\ge 0}H_{S,q^n\alpha}$ is \linebreak $q$-hyperinvariant, where $H_{S,q^n\alpha}$ are generalized eigenspaces of $q^n\alpha$. In particular generalized eigenspace of $0$ is $q$-hyperinvariant.\end{corollary}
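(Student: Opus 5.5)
The plan is to apply Lemma \ref{re1} repeatedly to an arbitrary operator $R$ that is $q$-commuting with $S$, meaning $SR=qRS$ with the twist $qI$. Fix such an $R$. Lemma \ref{re1} is stated for the pair $(S,T)$, but only the relation between $S$ and the second operator is used, so it applies verbatim with $R$ in place of $T$. Put $\mathcal{M}=\bigvee_{n\ge 0}H_{S,q^n\alpha}$. By Lemma \ref{re1}, applied to the eigenvalue $q^n\alpha$, we get $R H_{S,q^n\alpha}\subset H_{S,q^{n+1}\alpha}$ whenever $q^n\alpha$ is an eigenvalue of $S$.

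If $q^n\alpha$ is not an eigenvalue of $S$, then $H_{S,q^n\alpha}=\{0\}$. Hence $R$ maps each summand into the next summand of $\mathcal{M}$, and the trivial summands cause no issue. In particular, $R$ maps the linear span of $\bigcup_{n\ge0} H_{S,q^n\alpha}$ into $\mathcal{M}$. Since $R$ is bounded, it maps the closure of this span, which is $\mathcal{M}$, into $\overline{\mathcal{M}}=\mathcal{M}$. Therefore $\mathcal{M}$ is invariant under every $q$-commuting operator, that is, it is $q$-hyperinvariant.

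For the last part, take $\alpha=0$. Then $q^n\alpha=0$ for all $n$, so $\mathcal{M}=H_{S,0}$, and the claim follows directly from the first part. One could also see it straight from Lemma \ref{re1}, which gives $RH_{S,0}\subset H_{S,q\cdot 0}=H_{S,0}$.

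The only delicate point is what "generalized eigenspace" means in infinite dimensions. I would take $H_{S,\alpha}$ to be the closed span of the Jordan chains, as in the proof of Lemma \ref{re1}, or equivalently the closure of $\bigcup_k\ker(S-\alpha)^k$. Lemma \ref{re1} shows that $R$ maps each chain vector into $\bigcup_k\ker(S-q\alpha)^k$; I would also double-check this directly from the identity $(S-q\alpha)^kR=q^kR(S-\alpha)^k$, which follows by induction from $SR=qRS$. Then continuity of $R$ passes this to the closures. I expect this closure argument to be the main, and essentially only, obstacle; everything else is a direct consequence of the lemma.
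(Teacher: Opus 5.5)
Your proposal is correct and matches the paper, which states this corollary without a separate proof as an immediate consequence of Lemma~\ref{re1} iterated along $\alpha, q\alpha, q^2\alpha,\dots$, with $\alpha=0$ as the case $q\cdot 0=0$. Your continuity/closure step and the identity $(S-q\alpha)^kR=q^kR(S-\alpha)^k$ supply the infinite-dimensional details that the paper leaves implicit.
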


 In particular, it implies the model of $q$-doubly commuting pair on finite-dimensional spaces. Indeed, doubly $q$-commuting pair may be decomposed among pairs as in the following corollaries. The corollaries are under weaker that doubly commutativity assumptions.

\begin{corollary}\label{model}
Let $(S,T)$ be a $q$-commuting pair, $\alpha\neq 0$ be an eigenvalue of $S,$ and $ q^i\neq 1$ for $i=1,\dots,n-1,$ and $$S=\left(\begin{array}{ccccc} J_{S,\alpha} & 0 & \cdots & \cdots & 0 \\ 0 & J_{S, q\alpha} &\ddots & \ddots & \vdots\\ \vdots & \ddots & \ddots& \ddots& \vdots\\ \vdots & \ddots & \ddots & \ddots & 0\\ 0 & \cdots & \cdots & 0 & J_{S, q^{n-1}\alpha} \end{array}\right).$$
\begin{itemize}
\item If $ q^n\neq 1$,  then neither of $\bar q\alpha,  q^n\alpha$ is an eigenvalue of $S,$ and $T$ is of the form
$$T=\left(\begin{array}{ccccc} 0 & \cdots & \cdots & \cdots & 0 \\ T_1 & \ddots & \ddots & \ddots & 0\\ 0 & T_2 & \ddots & \ddots & 0\\ \vdots & \ddots & \ddots & \ddots & 0\\ 0 & \cdots & 0 & T_{n-1} & 0 \end{array}\right).$$
\item If $ q^n=1,$
 then $T$ has a more general form $$T=\left(\begin{array}{ccccc} 0 & \cdots & \cdots & 0 & T_n \\ T_1 & \ddots & \ddots & \ddots & 0\\ 0 & T_2 & \ddots & \ddots & 0\\ \vdots & \ddots & \ddots & \ddots & 0\\ 0 & \cdots & 0 & T_{n-1} & 0 \end{array}\right).$$
 If $T_i=0,$ then we may reorder the Jordan parts of $S$ to \linebreak $J_{S, q^i\alpha},\dots,J_{S, q^{n-1}},J_{S,\alpha}\dots, J_{S, q^{i-1}\alpha}.$ Then $T_i=0$ is positioned in upper right corner of $T$ matrix.
\end{itemize}

Moreover, if there is a unique Jordan block of a given eigenvalue, then taking the basis \eqref{ee} as in Lemma \ref{re1}, we get the form \begin{equation}\label{Tiform}T_i=\left(\begin{array}{cc} 0 & \text{diag}(1, q,\dots, q^{k-l-1})\\ 0 & 0\\\end{array}\right),\end{equation} where $k-l-1$ is as in Lemma \ref{re1} and $0$ are matrices of the respective size. In particular, $\ker T_i=\{0\}$ if and only if $l=0$ and the matrix of $T_i$ is square, so the corresponding Jordan blocks of $ q^{i-1}\alpha$ and $ q^{i}\alpha$ are of the same size.
\end{corollary}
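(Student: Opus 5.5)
The plan is to apply Lemma \ref{re1} block by block. By assumption, $H=H_{S,\alpha}\oplus H_{S,q\alpha}\oplus\dots\oplus H_{S,q^{n-1}\alpha}$, and $S$ is block diagonal with respect to this decomposition, the summands being its generalized eigenspaces. Since $\alpha\neq 0$ and $q^i\neq 1$ for $1\le i\le n-1$, the numbers $\alpha,q\alpha,\dots,q^{n-1}\alpha$ are pairwise distinct, so the decomposition is well defined. Lemma \ref{re1} gives $T H_{S,q^{i}\alpha}\subset H_{S,q^{i+1}\alpha}$. Hence the only possibly nonzero block in block column $i$ (indexed from $0$) is $T_{i+1}:=P_{H_{S,q^{i+1}\alpha}}T|_{H_{S,q^i\alpha}}$, which sits on the subdiagonal for $i\le n-2$.

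\textbf{Last column.} Here $T$ maps $H_{S,q^{n-1}\alpha}$ into $H_{S,q^n\alpha}$.

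If $q^n\neq 1$, then $q^n\alpha$ differs from each $q^j\alpha$, $0\le j\le n-1$, because $q^{n-j}\neq 1$ for $1\le n-j\le n$. Since $\sigma_p(S)=\{\alpha,\dots,q^{n-1}\alpha\}$ on $H$, we get $H_{S,q^n\alpha}=\{0\}$, so the last column vanishes. Likewise $\bar q\alpha=q^{-1}\alpha$ is not among $q^j\alpha$: equality would force $q^{j+1}=1$ with $1\le j+1\le n$, which is excluded for $j+1<n$ by hypothesis and for $j+1=n$ by $q^n\neq1$. This gives the first item.

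If $q^n=1$, then $H_{S,q^n\alpha}=H_{S,\alpha}$, so the last column has the single block $T_n$ in the upper right corner. The remark about reordering is just a cyclic relabelling of the summands: starting the list at $q^i\alpha$ moves the zero block $T_i$ to the corner position.

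\textbf{Form of $T_i$ for a single Jordan block.} Take Jordan chains $x_1,\dots,x_k$ for $q^{i-1}\alpha$ and let $l$ be as in the proof of Lemma \ref{re1}. That proof shows $Tx_1=\dots=Tx_l=0$ and that $Tx_{l+1},\bar qTx_{l+2},\dots,\bar q^{k-l-1}Tx_k$ form the beginning of a Jordan chain for $q^i\alpha$. Since there is a unique block, this chain can be completed to the basis \eqref{ee}. In that basis $Tx_{l+m}=q^{m-1}e_m$, which gives the matrix $\left(\begin{smallmatrix}0&\mathrm{diag}(1,q,\dots,q^{k-l-1})\\0&0\end{smallmatrix}\right)$; the placement of the zero rows and columns depends on the ordering convention.

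\textbf{Kernel criterion.} It then follows directly: $\ker T_i=\{0\}$ iff there are no zero columns, i.e.\ $l=0$. Injectivity only requires $k\le$ the size of the target block, but for the matrix to be square with no zero rows as well, the chain must not need completing ($j=0$), i.e.\ the two blocks have equal size. I will state this carefully, since the corollary phrases the condition as ``$l=0$ and square''.

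The main obstacle is this last step: checking that the chain produced in Lemma \ref{re1} really is an initial segment of the unique Jordan chain, and matching indices so that the block lands exactly in the displayed position.
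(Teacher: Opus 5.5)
Your argument is the one the paper intends. The corollary is stated without a separate proof and is read off from Lemma \ref{re1} and the chain \eqref{ee} in its proof. Your handling of the block positions, of $\bar q\alpha$ and $q^n\alpha$, and of the cyclic reordering is correct.

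The step you flag as the main obstacle is where the unique-block hypothesis is used, and it can be closed as follows. Put $N=(S-q^i\alpha I)|_{H_{S,q^i\alpha}}$, which is a single nilpotent Jordan block of size $m$. Put $z_r=\bar q^{\,r-1}Tx_{l+r}$ for $r=1,\dots,p:=k-l$. Then $Nz_1=0$, $Nz_r=z_{r-1}$ and $N^{p-1}z_p=z_1\neq 0$. Hence $p\le m$ and $z_p\in\ker N^{p}=N^{m-p}\bigl(H_{S,q^i\alpha}\bigr)$. Write $z_p=N^{m-p}w$; then $N^{m-1}w=z_1\neq 0$. So $e_r:=N^{m-r}w$, $r=1,\dots,m$, is a Jordan basis with $e_r=N^{p-r}z_p=z_r$ for $r\le p$. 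Consequently $Tx_{l+r}=q^{r-1}e_r$. In the orderings $x_1,\dots,x_k$ and $e_1,\dots,e_m$ this gives exactly \eqref{Tiform}: the zero columns belonging to $x_1,\dots,x_l$ are on the left, and the zero rows belonging to $e_{p+1},\dots,e_m$ are at the bottom.

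Your doubt about the kernel criterion is justified: as literally stated, its ``only if'' direction is false. Here is a counterexample.
Take $n=2$, any $q\neq 1$, and $\mathbb{C}^3$ with basis $x_1,e_1,e_2$. Let $Sx_1=\alpha x_1$, $Se_1=q\alpha e_1$, $Se_2=q\alpha e_2+e_1$. Let $Tx_1=e_1$ and $Te_1=Te_2=0$.
This pair is $q$-commuting, and each eigenvalue has a single Jordan block. Yet $T_1=\left(\begin{smallmatrix}1\\ 0\end{smallmatrix}\right)$ is injective but not square.

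What your computation actually proves is the corrected version:
\begin{itemize}
\item $\ker T_i=\{0\}$ if and only if $l=0$, and then $k=p\le m$ automatically;
\item $T_i$ is bijective if and only if, in addition, $k=m$, i.e., the Jordan blocks of $q^{i-1}\alpha$ and $q^{i}\alpha$ have the same size.
\end{itemize}
State the result in this form rather than trying to prove the corollary's wording.
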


Note that assumptions on the form of $S$ in Corollary \ref{model} yields $\alpha,\dots,q^{n-1}\alpha$ are the only eigenvalues of $S$. The similar result is possible for infinite number of eigenvalues.
\begin{corollary}\label{model1}
Let $(S,T)$ be $q$-commuting pair, $\alpha\neq 0$ be an eigenvalue of $S,$ and  $ q^i\alpha$ are different eigenvalues of $S$ ($ q^i\neq 1$) for all $i\in\mathbb{Z}_+,$
and $$S=\left(\begin{array}{cccc} J_{S,\alpha} & 0 & \cdots &\cdots  \\ 0 & J_{S, q\alpha} &\ddots & \ddots \\ \vdots & \ddots & \ddots& \ddots\\ \vdots & \ddots & \ddots& \ddots \end{array}\right).$$ Then
$$T=\left(\begin{array}{cccc}  0 & \cdots &\cdots &\cdots \\ T_1 &0  &\ddots & \ddots \\ 0 & T_2 & \ddots& \ddots\\ \vdots & \ddots & \ddots& \ddots \end{array}\right).$$
Moreover, the form of $T_i$ as in \eqref{Tiform} remains correct under the same assumptions.

The similar results are correct, if  $ q^i\neq 1$  and $ q^i\alpha$ are different eigenvalues of $S$ for all $i\in\mathbb{Z}_-$ or  all $i\in\mathbb{Z}$.
\end{corollary}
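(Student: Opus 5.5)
The plan is to reduce everything to Lemma \ref{re1} and the block bookkeeping already used in Corollary \ref{model}. Write $H=\bigvee_{i\ge 0}H_{S,q^i\alpha}$, which is the space the matrix form of $S$ describes. Since the $q^i\alpha$ are pairwise distinct eigenvalues, the generalized eigenspaces are linearly independent. Each $H_{S,q^i\alpha}$ is invariant for $S$, so $S$ is block diagonal with blocks $J_{S,q^i\alpha}$, as in the statement.

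\emph{Step 1: the subdiagonal shape of $T$.} By Lemma \ref{re1}, $T$ maps $H_{S,q^i\alpha}$ into $H_{S,q^{i+1}\alpha}$. In the $(i+1)$-st block column, the only nonzero entry is therefore in block row $i+2$; call it $T_{i+1}:=T|_{H_{S,q^i\alpha}}\colon H_{S,q^i\alpha}\to H_{S,q^{i+1}\alpha}$. There is no wrap-around entry in the upper right, unlike the case $q^n=1$ in Corollary \ref{model}. The reason is that $q^{i+1}\alpha$ is always again one of the listed eigenvalues, and the distinctness of all $q^i\alpha$ means $H_{S,q^{i+1}\alpha}$ never coincides with an earlier block. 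As a consequence the first block row of $T$ is zero, since nothing is mapped into $H_{S,\alpha}$.

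\emph{Step 2: the form of $T_i$.} Assume a unique Jordan block for each eigenvalue and fix the Jordan chain $x_1,\dots,x_k$ for $q^{i-1}\alpha$. The proof of Lemma \ref{re1} gives $Tx_j=0$ for $j\le l$. It also gives that $Tx_{l+1},\bar q Tx_{l+2},\dots,\bar q^{k-l-1}Tx_k$ is an initial segment of a Jordan chain for $q^{i}\alpha$. Uniqueness of the block means this chain, supplemented as in \eqref{ee}, is the Jordan basis of $H_{S,q^i\alpha}$. In these bases $Tx_{l+j}=q^{j-1}\cdot(\text{$j$-th basis vector})$, which yields exactly \eqref{Tiform}: zero columns for $x_1,\dots,x_l$, the diagonal $\text{diag}(1,q,\dots,q^{k-l-1})$, and zero rows for $y_1,\dots,y_j$. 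The remark on injectivity follows as before. One subtlety to check is that the basis of $H_{S,q^i\alpha}$ used as codomain for $T_i$ must also serve as domain for $T_{i+1}$. This is handled by starting the chain description of $T_{i+1}$ from whatever Jordan chain was chosen, since \eqref{ee} \emph{is} a Jordan chain for $q^i\alpha$. The matrix of $J_{S,q^i\alpha}$ in it is the standard Jordan block, so Step 2 applies inductively in $i$.

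\emph{Step 3: the $\mathbb{Z}_-$ and $\mathbb{Z}$ variants.} Index the blocks by $i\in\mathbb{Z}_-$ or $i\in\mathbb{Z}$; the same argument gives a bi-infinite or reversed subdiagonal form. For $\mathbb{Z}_-$, $T$ maps the block of $q^{0}\alpha=\alpha$ to that of $q\alpha$, which is not among the eigenvalues. As remarked after Lemma \ref{re1}, $T$ must then vanish on $H_{S,\alpha}$, so the last block column is zero.

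The main obstacle is the infinite-dimensional bookkeeping. The matrix representation is only meaningful on the algebraic span, or its closure, of the generalized eigenspaces, and these need not be orthogonal. I would state the result as the description of $T$ on the linear span $\sum_i H_{S,q^i\alpha}$. Boundedness of $T$ then gives the closure.
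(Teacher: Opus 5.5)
Your proposal is correct and is essentially the argument the paper leaves implicit. Lemma \ref{re1} sends each $H_{S,q^i\alpha}$ into $H_{S,q^{i+1}\alpha}$, which together with the distinctness of the $q^i\alpha$ gives the block-subdiagonal shape, and the Jordan chain \eqref{ee} from that lemma's proof yields \eqref{Tiform}. Your extra care about compatible Jordan bases and the infinite-dimensional bookkeeping goes beyond what the paper spells out, with one small addition: in the $\mathbb{Z}_-$ and $\mathbb{Z}$ variants there is no first block to start your forward induction, so the bases must also be propagated backwards. In the single-block case this works by taking the top vector $x_k$ of the preceding chain to be a preimage under $T$ of $q^{k-l-1}$ times the $(k-l)$-th vector of the chain already fixed.
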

The next result summarizes the limitations on $q$-commuting pairs with trivial kernels, in particular $q$-commuting isometries.

\begin{corollary}Let $S,T$ be a $q$-commuting pair, where $q\neq 1$.
\begin{itemize}
\item If $ q^{k-1}\alpha\neq 0$ is an eigenvalue of $S$ and $ q^{k}\alpha$ is not an eigenvalue of $S,$ then $T$ vanishes on $H_{S, q^{k-1}\alpha}$.
\item If $\dim H<\infty$ and $\alpha\neq 0$ is an eigenvalue of $S,$ then $\ker T=\{0\}$ implies that $ q$ is a root of $1$ of arbitrarily small degree, and the Jordan blocks of eigenvalues $ q^i\alpha$ are of the same size.
\end{itemize}
\end{corollary}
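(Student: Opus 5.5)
The plan is to derive both items from Lemma \ref{re1}, using only that $T$ maps generalized eigenspaces of $S$ along the orbit $\alpha\mapsto q\alpha$.

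\textbf{First item.} Apply Lemma \ref{re1} to the eigenvalue $q^{k-1}\alpha$ of $S$. It gives $T H_{S,q^{k-1}\alpha}\subset H_{S,q^k\alpha}$. Since $q^k\alpha$ is not an eigenvalue, $H_{S,q^k\alpha}=\{0\}$, so $T$ vanishes there. The remark after Lemma \ref{re1} explains that the lemma is meaningful even when $q\alpha$ is not an eigenvalue: the generalized eigenspace is then trivial, and the proof of the lemma gives exactly $Tx_i=0$ in that case.

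\textbf{Second item.} Here $\dim H<\infty$ and $\ker T=\{0\}$. Assume $\alpha\ne0$ is an eigenvalue. Since $H_{S,\alpha}\neq\{0\}$ and $T$ is injective, the first item, applied iteratively, shows that $q\alpha, q^2\alpha,\dots$ are all eigenvalues of $S$.

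Finite dimension allows only finitely many eigenvalues. Since $\alpha\neq0$, we get $q^i\alpha=q^j\alpha$ for some $i<j$, hence $q^{j-i}=1$, so $q$ is a root of unity. Let $n$ be its order. Because $q\neq1$, the eigenvalues $\alpha,\dots,q^{n-1}\alpha$ are distinct and form a cycle. The phrase ``of arbitrarily small degree'' looks like a misprint; I read it as ``a root of $1$ of some degree,'' with the cycle length being the order $n$.

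\textbf{Equal Jordan blocks.} Injectivity of $T$ restricted to each $H_{S,q^i\alpha}$ gives $\dim H_{S,q^i\alpha}\le \dim H_{S,q^{i+1}\alpha}$ around the cycle. Going once around the cycle forces all these dimensions to be equal, so each restriction $T:H_{S,q^i\alpha}\to H_{S,q^{i+1}\alpha}$ is a bijection.

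To compare block sizes, use the fact that $TS=\bar q ST$. Then $(S-q\beta)T=qT(S-\beta)$, so $T$ intertwines $S-\beta$ on $H_{S,\beta}$ with $S-q\beta$ on $H_{S,q\beta}$, up to the nonzero scalar $q$. Since $T$ is a bijection there, it is a similarity between the nilpotent parts $(S-\beta)|_{H_{S,\beta}}$ and $q^{-1}(S-q\beta)|_{H_{S,q\beta}}$. Rescaling a nilpotent operator preserves its Jordan type. Hence the Jordan structures of $q^i\alpha$ and $q^{i+1}\alpha$ coincide, i.e.\ the Jordan blocks have the same sizes. This is consistent with \eqref{Tiform} in Corollary \ref{model}, where $l=0$ and square $T_i$ were needed.

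\textbf{Main obstacle.} The delicate step is this last one: matching the Jordan structures, not just the dimensions, when there are several Jordan blocks. The similarity argument via the intertwining $(S-q\beta)T=qT(S-\beta)$ handles it cleanly. It avoids relying on the unique-block form \eqref{Tiform}.
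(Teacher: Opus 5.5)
Your proof is correct. The first item and the root-of-unity step are essentially what the paper does. The corollary has no separate proof there; it is read off Lemma \ref{re1} and Corollary \ref{model}, where for $q^n\neq 1$ the last block column of $T$ vanishes. That amounts to your observation that injectivity pushes eigenvalues along the orbit $q^i\alpha$, and finite dimension forces the orbit to close up.

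The genuine difference is in the Jordan-block claim. The paper obtains it from the explicit matrix \eqref{Tiform} of $T_i$ in the chain basis \eqref{ee}, and that form is derived only when each $q^i\alpha$ has a single Jordan block. You instead get bijectivity of each $T|_{H_{S,q^i\alpha}}$ by counting dimensions once around the cycle. You then use $(S-q\beta)T=qT(S-\beta)$ to see that the nilpotent part at $q\beta$ is similar to $q$ times the nilpotent part at $\beta$.

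Your route is more general. It allows any number of blocks per eigenvalue and concludes that the full Jordan types coincide. That is the right reading, since blocks of different sizes within one eigenvalue do occur, e.g. for $q=-1$. It also makes explicit where squareness comes from. In \eqref{Tiform}, injectivity of a single $T_i$ only forces $l=0$, i.e. the block at $q^{i-1}\alpha$ is at most as large as the block at $q^{i}\alpha$; equality needs the full cycle. The paper's route, in exchange, yields an explicit normal form for $T$, which is what the model is after.

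Two small points. First, distinctness of $\alpha,\dots,q^{n-1}\alpha$ comes from $n$ being the order of $q$ and $\alpha\neq 0$, not from $q\neq 1$. Second, your argument gives at once $n\dim H_{S,\alpha}\le\dim H$, so $n\le\dim H$. This is probably what the odd phrase about the degree is meant to convey.
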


The generalized eigenspace of $S$ does not necessarily reduce $S$, and similarly, there is no reason for $\bigvee_n  H_{S,q^n\alpha}$ to reduce $S$. This is true for normal operators, and we formulate the following Theorem \ref{mod_thm} for normal operator $S,$ which is reduced by a generalized eigenspace, and in turn, $\bigvee_n  H_{S,q^n\alpha}$ reduces $q$-commuting pair (in fact, $q$-doubly commuting). However, the above result works for a more general case. Assume there are eigenvalues $\{\alpha_i\}_{i\in J}$  such that $\alpha_i\neq q^n\alpha_j$ for $i\neq j$ and any $n$, and the corresponding spaces $\{\bigvee_n  H_{S,q^n\alpha_i}\}_{i\in  J}.$ Any $T$ that $q$-commutes with $S$ is described on each of such spaces by Lemma \ref{re1}. Hence, we get a description of $q$-commuting pairs in the case where the spectrum of one of operators is equal to a point spectrum. In particular, we obtain a model on finite-dimensional spaces. The general description of such a pair is technically complicated. Indeed, if there are many $\alpha_i$ as in Corollary \ref{model1} with infinite sequences $\alpha_1, q\alpha_i,\dots$, then we need to interlace different blocks $J_{S,q^m\alpha_i}$ and $J_{S,q^n\alpha_j}$ in $S$ matrix and place the corresponding blocks in specific locations in $T$ matrix. Hence, instead of a general form, we present the idea on  the relatively small spaces $\mathbb{C}^2$ and  $\mathbb{C}^3.$
\begin{theorem}\label{mod_thm}
Let $(S,T)$ be a $q$-commuting pair, where $S$ is a normal and $q\neq 1$. If $\alpha$ is an eigenvalue of $S$ (or $T$) and $H_\alpha$ is the minimal subspace reducing $(S,T)$ containing the eigenspace $H_{S,\alpha},$ then  $(S|_{H_\alpha},T|_{H_\alpha})$ takes one of the forms outlined in Corollary \ref{model} or Corollary \ref{model1}.
\end{theorem}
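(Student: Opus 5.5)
We plan to show that $H_\alpha$ equals $\bigvee_{n\in\mathbb{Z}} H_{S,q^n\alpha}$, or its one-sided variant; the forms of Corollary \ref{model} and Corollary \ref{model1} then follow from Lemma \ref{re1}.

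\emph{Normality and doubly commutativity.} Since $S$ is normal, every generalized eigenspace $H_{S,\beta}$ equals $\ker(S-\beta I)$ and reduces $S$. Moreover, $S^*$ acts on $\ker(S-\beta I)$ as $\bar\beta I$. From $ST=qTS$, Fuglede--Putnam applied to the normal operators $S$ and $qS$ (so $S\,T=T\,(qS)$) gives $S^*T=\bar q\,TS^*$. Taking adjoints yields $T^*S=\bar q\, ST^*$, that is, $ST^*=q T^*S$ after rearranging; with twist $qI$ this is exactly the doubly commuting relation $ST^*=\bar{q}^{\,*}\cdots$ of Definition \ref{twis_def} (up to bookkeeping of the constant). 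Hence $(S,T)$ is $q$-doubly commuting.

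\emph{Identifying $H_\alpha$.} By Lemma \ref{re1}, $T$ maps $H_{S,\beta}$ into $H_{S,q\beta}$, and $T^*$ maps $H_{S,\beta}$ into $H_{S,\bar q\beta}$. So $K=\bigvee_{n\in\mathbb{Z}}H_{S,q^n\alpha}$ is invariant under $S,S^*,T,T^*$, and therefore reduces $(S,T)$. Conversely, we need every $H_{S,q^n\alpha}$ in $K$ that is actually reached to lie in $H_\alpha$. Here we discard the spaces that $T$ and $T^*$ do not reach from $H_{S,\alpha}$: define $H_\alpha$ as the closed span of the orbit of $H_{S,\alpha}$ under words in $T,T^*$, which are the only operators that move between eigenspaces. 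Each such word maps $H_{S,\alpha}$ into some $H_{S,q^m\alpha}$, so $H_\alpha=\bigvee_m K_m$ with $K_m\subset H_{S,q^m\alpha}$ and $S|_{K_m}=q^m\alpha I$.

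\emph{Case split.} If $q$ is a root of unity of minimal order $n$, the indices $m$ live in $\mathbb{Z}_n$. Then $S=\mathrm{diag}(\alpha,\dots,q^{n-1}\alpha)$ acting blockwise, and $T$ is block-subdiagonal with a possible corner block; this is Corollary \ref{model} with $q^n=1$, or with $q^n\neq1$ after cutting at a vanishing $T_i$ and reordering as described there. If $q$ is not a root of unity, the $q^m\alpha$ are distinct. The set of $m$ with $K_m\neq\{0\}$ is an interval in $\mathbb{Z}$, because it is connected through the steps $T$ and $T^*$ of $\pm1$. It is therefore finite, a half-line, or all of $\mathbb{Z}$, which gives Corollary \ref{model} or the variants of Corollary \ref{model1}. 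Since the Jordan blocks are trivial, the blocks $T_i$ are just the restrictions $T|_{K_{i-1}}\colon K_{i-1}\to K_i$.

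The case where $\alpha$ is an eigenvalue of $T$ reduces to the previous argument by symmetry in the order of the pair (using the twist $\bar q$), but only if $T$ is normal. If $T$ is not normal, we instead use the fact that $S$ maps $\ker(T-\alpha)$ into $\ker(T-\bar q\alpha)$. The main obstacle is the case $\alpha=0$, where $q^m\alpha$ collapses to the single point $0$; there $H_{S,0}=\ker S$ reduces $T$ by Proposition \ref{gen_dtwisted}, and the form degenerates to $S=0$ on $H_\alpha$ with arbitrary $T$. We expect to treat this separately or exclude it, since the corollaries assume $\alpha\neq0$.
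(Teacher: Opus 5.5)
Your argument is correct for an eigenvalue $\alpha\neq0$ of $S$. It uses the same mechanism as the paper's proof: Lemma~\ref{re1} applied to both $T$ and $T^*$, with the needed $q$-double commutativity obtained from the normality of $S$ via Fuglede--Putnam. The paper only alludes to that step in the paragraph before the theorem.

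One constant is wrong. The adjoint of $S^*T=\bar q\,TS^*$ is $T^*S=q\,ST^*$, i.e.\ $ST^*=\bar q\,T^*S$. This is exactly the relation in Definition~\ref{twis_def} with $U=qI$, and it is what gives $T^*H_{S,\beta}\subset H_{S,\bar q\beta}$, which you correctly use later.

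The real difference from the paper is which subspace you exhibit. The paper takes the maximal chain $q^i\alpha,\dots,q^j\alpha$ of eigenvalues of $S$ and checks that the full span $\bigvee_{\iota=i}^jH_{S,q^\iota\alpha}$ reduces the pair. This span can be strictly larger than the minimal $H_\alpha$ (e.g.\ $S=\text{diag}(\alpha,q\alpha)$, $T=0$), and the paper tacitly relies on the corollaries allowing $T_i=0$. You instead identify $H_\alpha$ itself as the closed span of $T,T^*$-words applied to $H_{S,\alpha}$. State this as a fact rather than a definition: the span is invariant under $S,S^*$ because they act as scalars on each $H_{S,q^m\alpha}$, and every reducing subspace containing $H_{S,\alpha}$ contains it. Your connectivity argument that $\{m:K_m\neq\{0\}\}$ is an interval (an arc of $\mathbb{Z}_n$ when $q^n=1$) then replaces the paper's maximal-chain bookkeeping. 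This costs a few lines but proves the statement exactly as phrased.

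Your caveats are well founded, and the paper's proof passes over them.
\begin{itemize}
\item For $\alpha=0$ the claim fails as written: take $S=0$, $T=I$ on $\mathbb{C}$.
\item The ``(or $T$)'' clause fails without normality of $T$. Take $S=0$, $T=\begin{pmatrix}1&1\\0&2\end{pmatrix}$, $\alpha=1$. Then $H_\alpha=\mathbb{C}^2$, and $(0,T)$ fits neither corollary: $S$ has no nonzero eigenvalue, and $\sigma(T)=\{1,2\}$ is not contained in $\{\bar q^{k}\}$.
\end{itemize}
So the ``(or $T$)'' case should be excluded rather than patched via the map $S\colon\ker(T-\alpha)\to\ker(T-\bar q\alpha)$.
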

\begin{proof}
For a given $\alpha,$ an eigenvalue of $S,$ there is a maximal sequence \linebreak $ q ^i\alpha , q ^{i+1}\alpha,\dots  q^j\alpha$ of different eigenvalues of $S$ for some $i\leq 0\leq j,$ where the maximality means that either $ q ^{i-1}= q ^j$ or: ($i=-\infty$ or $ q ^{i-1}\alpha$ is not an eigenvalue of $S$) and ($j=\infty$ or $ q ^{j+1}\alpha$ is not an eigenvalue of $S$). Then, by Lemma \ref{re1}, the space $\bigvee_{\iota=i}^jH_{S, q^\iota\alpha}$ reduces $T$, so it reduces the pair. Moreover, the restriction has one of the forms in Corollary \ref{model} and Corollary \ref{model1}.
\end{proof}

\subsection{Model of $q$-commuting pairs on finite-dimensional spaces}

The results on the point spectrum clearly describe the $q$-doubly commuting pairs on finite dimensional spaces. However, it describes also the case of only $q$-commuting operators. In this subsection, we provide details on relatively small subspaces $\mathbb{C}^2$ and $\mathbb{C}^3,$ which illustrate the general idea. In general, the twist may have more than one eigenvalue. If this is the case, the pair decomposes into pairs on lower dimensional spaces - eigenspaces of the twist. Hence, we restrict to the twist of the form $q I,$ thus considering $q$-commuting pairs. For the sake of completeness, we should mention the trivial one-dimensional case: if $ q\neq 1$ then the pair has product $0$ so one of the operators is trivial. In the remaining part we are looking for  nontrivially $q$-commuting pairs, so we assume $q\neq 1$ and describe pairs, such that $ST\neq 0$.

The case $\mathbb{C}^2.$

\begin{remark}\label{C2}
Assume $S, T$ on $\mathbb{C}^2$ are $q$-commuting, where $ q\neq 1$ and $ST\neq 0.$  Then, there can be chosen a basis such that one of the following holds:
$$S=\left(
      \begin{array}{cc}0 & 1\\0 &0\\\end{array}\right),\quad T=\left(
      \begin{array}{cc}a & b\\0 & q a\\\end{array}\right),$$
where $a\neq 0$ is necessary for $ST\neq 0,$ or
 $$S=\left(
      \begin{array}{cc}\alpha & 0\\0 & q \alpha\\\end{array}\right),\quad T=\left(\begin{array}{cc}0 & 0\\b &0\\\end{array}\right),$$
or if $ q=-1,$ then
$$S=\left(
      \begin{array}{cc}\alpha & 0\\0 &- \alpha\\\end{array}\right),\quad T=\left(\begin{array}{cc}0 & a\\ b &0\\\end{array}\right)$$
in the respective basis.
\end{remark}
\begin{proof}
If $0$ is an eigenvalue of $S,$ then one of the cases holds:
\begin{itemize}
\item $S=\left(
      \begin{array}{cc}0 & 1\\0 &0\\\end{array}\right)$ and one can check by a direct calculation that $T$ is of the form as in the statement,
\item $S=0$ so it is not the case $ST\neq 0,$
\item There is the second eigenvalue $\alpha\neq 0$ of $S,$ so $S(ax+by)=\alpha by,$ where $x$ is the eigenvector of $0$ and $y$ is the eigenvector of $\alpha$. By Lemma \ref{re1}, $T$ maps $H_{S,\alpha}=\mathbb{C}y$ to $H_{S, q\alpha}=\{0\},$ so  $T(y)=0,$ and in turn $TS(ax+by)=0,$ which is not the case.
    \end{itemize}

     If $S$ has two non-zero eigenvalues $\alpha_1, \alpha_2$, then by Lemma \ref{re1}, $T$ maps $H_{S,\alpha_i}$ to $H_{S, q\alpha_i}$ for $i=1,2$. If there is neither $\alpha_1= q\alpha_2$ nor the other way, then $T$ maps both eigenspaces to trivial spaces, so $T=0$ which is not the case.
     If there is $\alpha_1= q\alpha_2$, we get the second or the third case by Corollary \ref{model}, respectively.
\end{proof}
 The case  $\mathbb{C}^3.$

 \begin{remark}\label{C3}
 Assume $S, T$ on $\mathbb{C}^3$ are $q$-commuting, where $ q\neq 1$ and $ST\neq 0.$  Then, there can be chosen a basis such that one of the following holds:
 \begin{enumerate}
 \item There is a basis $\{x,y,z\}$ such that $$S(ax+by+cz)=\alpha ax+S_1(by+cz)$$ and $$T(ax+by+cz)=T_1(by+cz)$$  where the pair $(S_1, T_1)$ is unitarily equivalent to one of the pairs in Remark \ref{C2},
 \item There is a basis $\{x,y,z\}$ such that $$S(ax+by+cz)=S_1(by+cz)$$ and $$T(ax+by+cz)=aT(x)+T_1(by+cz)$$  where the pair $(S_1, T_1)$ is unitarily equivalent to one of the pairs in Remark \ref{C2},
 \item $$S=\left(
      \begin{array}{ccc}0 & 1 & 0\\0 & 0 & 1\\0 & 0 & 0\end{array}\right),\quad T=\left(
      \begin{array}{ccc}a & b & c\\0 & q a &  q b\\ 0 & 0 &  q^2 a\\\end{array}\right)$$
where $a\neq 0$ or $b\neq 0,$
\item      $$S=\left(
      \begin{array}{ccc}0 & 0 & 0\\0 & 0 & 1\\0 & 0 & 0\end{array}\right),\quad T=\left(
      \begin{array}{ccc}a & 0 & c\\d & b & f\\ 0 & 0 &  q b\\\end{array}\right)$$
      where $b\neq 0,$
 \item  $$S=\left(
      \begin{array}{ccc}\alpha & 0 & 0\\0 &  q\alpha & \delta\\0 & 0 &  q\alpha\end{array}\right),\quad T=\left(
      \begin{array}{ccc}0 & 0 & 0\\a &0 & 0\\ 0 & 0 & 0\\\end{array}\right)$$
      where $\delta=0$ or $1$ and $a\neq 0,$
      \item  $$S=\left(
      \begin{array}{ccc}\alpha & \delta & 0\\0 & \alpha & 0\\0 & 0 &  q\alpha\end{array}\right),\quad T=\left(
      \begin{array}{ccc}0 & 0 & 0\\0 &0 & 0\\ 0 & a & 0\\\end{array}\right)$$
      where $\delta=0$ or $1$ and $a\neq 0,$
 \item  $$S=\left(
      \begin{array}{ccc}\alpha & 0 & 0\\0 &  q\alpha & 0\\0 & 0 &  q^2\alpha\end{array}\right),\quad T=\left(
      \begin{array}{ccc}0 & 0 & 0\\a &0 & 0\\ 0 & b & 0\\\end{array}\right)$$
      where $a\neq 0$ or $b\neq 0.$
      \end{enumerate}
      If $ q=-1,$ we have a more general form in (5) and (6):
   \begin{enumerate}[(5.1)]
   \item  $$S=\left(
      \begin{array}{ccc}\alpha & 0 & 0\\0 &  q\alpha & 1\\0 & 0 &  q\alpha\end{array}\right),\quad T=\left(
      \begin{array}{ccc}0 & 0 & b\\a &0 & 0\\ 0 & 0 & 0\\\end{array}\right),$$
      where  $a\neq 0$ or $b\neq 0$,
      \item  $$S=\left(
      \begin{array}{ccc}\alpha & 0 & 0\\0 &  q\alpha & 0\\0 & 0 &  q\alpha\end{array}\right),\quad T=\left(
      \begin{array}{ccc}0 & b & c\\a &0 & 0\\ 0 & 0 & 0\\\end{array}\right),$$
      where  $a\neq 0$ or $b\neq 0$ or $c\neq 0$,
      \end{enumerate}
      \begin{enumerate}[(6.1)]
      \item  $$S=\left(
      \begin{array}{ccc}\alpha & 1 & 0\\0 & \alpha & 0\\0 & 0 &  q\alpha\end{array}\right),\quad T=\left(
      \begin{array}{ccc}0 & 0 & b\\0 &0 & 0\\ 0 & a & 0\\\end{array}\right),$$
      where  $a\neq 0$ or $b\neq 0$,
      \item  $$S=\left(
      \begin{array}{ccc}\alpha & 0 & 0\\0 & \alpha & 0\\0 & 0 &  q\alpha\end{array}\right),\quad T=\left(
      \begin{array}{ccc}0 & 0 & c\\0 &0 & b\\ 0 & a & 0\\\end{array}\right),$$
      where  $a\neq 0$ or $b\neq 0$ or $c\neq 0$.
      \end{enumerate}
   If $ q^3=1,$   we have a more general form in (7)
   $$S=\left(
      \begin{array}{ccc}\alpha & 0 & 0\\0 &  q\alpha & 0\\0 & 0 &  q^2\alpha\end{array}\right),\quad T=\left(
      \begin{array}{ccc}0 & 0 & c\\a &0 & 0\\ 0 & b & 0\\\end{array}\right),$$
      where $a\neq 0$ or $b\neq 0$ or $c\neq 0$.
 \end{remark}
 \begin{proof}
If there is $\alpha,$ an eigenvalue of $S,$ such that neither $\bar q\alpha$ nor $ q\alpha$ is an eigenvalue of $S$ (in particular, $\alpha\neq 0$), then by Lemma \ref{re1}, $T$  maps $H_{S,\alpha}$ to trivial space. However, since $ST\neq 0$, and since a non-trivially twisted commuting pair requires the underlying space of dimension at least two, we get $\dim(H\ominus H_{S,\alpha})= 2$ and $\dim(H_{S,\alpha})= 1$, and in turn, the case (1).

 If for each eigenvalue $\alpha$ of $S$ at least one of $\bar q\alpha$ or $q\alpha$ is an eigenvalue of $S$ as well, then one of the following holds:
  \begin{itemize}
  \item $S$ has an eigenvalue $0$ and two different eigenvalues, which we may assume to be $\alpha$ and $q\alpha$.
  \item $S$ has precisely one eigenvalue $0$.
  \item $S$ has precisely two different eigenvalues, which we may assume to be $\alpha$ and $q\alpha$.
  \item $S$ has three different eigenvalues, which we may assume to be $\alpha,  q\alpha$ and $q^2\alpha.$
  \end{itemize}

 If $S$ has different eigenvalues $0,\alpha, q\alpha,$ then we get the case (2). More precisely, since $\alpha\neq 0$, the pair $(S_1, T_1)$ has the second or the third form in Remark \ref{C2}. The case (2) with $(S_1, T_1)$ of the first form in Remark \ref{C2} is also possible. However, it is the case of $S$ having only $0$ as an eigenvalue, and it is precisely (4) with $d=c=0$.

 If $S$ has the only eigenvalue $0$, then, since $ST\neq 0$ implies $S\neq 0,$ the Jordan form of $S$ matrix can have at most two Jordan blocks. This leads to (3) and (4) respectively, where the formula for $T$ may be verified by a direct calculation.

The remaining cases follow directly from Corollary \ref{model}. Let us only point out that for cases (5) and (6) and $\delta=0$ for general $q,$ one can check $T=\left(
      \begin{array}{ccc}0 & 0 & 0\\0 &0 & 0\\ a & b & 0\\\end{array}\right),$ or $T=\left(
      \begin{array}{ccc}0 & 0 & 0\\a &0 & 0\\ b & 0 & 0\\\end{array}\right)$ to be correct as well, respectively. However, it is not a more general form. Indeed, by the respective choice of eigenvectors for the basis, we get the form as in the statement. In the cases (5.2),(6.2), a similar choice of basis is made to obtain only one nontrivial value $a$ under the diagonal. Such a choice of basis does not guarantee only one nontrivial value above diagonal. In other words, the reduction of nontrivial entries may not be possible above and below the diagonal  simultaneously.
    \end{proof}

\section{Twisted commuting isometries}
This section is devoted to twisted commuting isometries. According to \cite{Majee}, a twisted commuting pair of isometries decomposes into a pair of unitaries and a completely non-unitary pair. The latter one is described by the model \eqref{model}. For a pair of twisted commuting unitaries, the model is known in the finite-dimensional case, where the pair decomposes among $q$-commuting unitaries - see \eqref{mod}.
We are looking for the general model of twisted commuting pairs of unitaries. We have two aims: the first one is  to construct $U$-commuting pairs of unitaries for a given twist $U$; the second one is to describe equivalent conditions for the twisted commutativity of a given a pair of unitaries. Note that, if a pair of unitaries $(V,W)$ is twisted commuting, then the twist is $V^*W^*VW.$

The first aim. The significant  limitation on pairs of $U$ commuting unitaries is the restriction to the commutant of $U$. We know from previous part, that the commutant of some unitaries is  a commutative algebra; for some others, the algebra is not commutative, but there are no twisted commuting isometries. In other words, the pair of $U$-commuting unitaries may not exist. Recall also Corollary \ref{sp_unit}, where twisted commuting unitaries have certain properties. Let us describe the construction of $U$-commuting pair of unitaries $(V,W)$ for a given unitary $U.$ The method uses the fact that $U=V^*W^*VW$ to obtain $(V,W)$ via factorization of $U.$ The procedure and required properties of the factorization are described below.

\begin{proposition}\label{U_mod}
There is a one-to-one correspondence between pairs of commuting unitaries $(A,B)$ intertwined by the unitary $C\in\{AB^*\}'$ and twisted commuting unitaries $(V,W).$
The correspondence is given by the following relations:
\begin{enumerate}
\item Given $(V,W)$ we take $A=VW,\;B=WV,\; C=V^*,$
\item Given $(A, B)$ and $C$ we take $V=C^*,\; W=CA.$
\end{enumerate}

Moreover, the twist is equal to $AB^*=V^*W^*VW$.
\end{proposition}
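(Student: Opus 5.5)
The plan is to read the phrase ``$(A,B)$ intertwined by $C$'' as $CA=BC$, equivalently $B=CAC^*$ since $C$ is unitary. I will then check the statement in three parts: each of the two maps lands in the required class, the two maps are mutually inverse, and the twist equals $AB^*$. Everything reduces to short algebraic identities. The only external tools are Proposition \ref{gen_twisted}(6) and the Fuglede--Putnam theorem.

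\emph{From $(V,W)$ to $(A,B,C)$.} Let $(V,W)$ be a $U$-commuting pair of unitaries, and set $A=VW$, $B=WV$, $C=V^*$.
\begin{itemize}
\item These are unitaries.
\item $A$ and $B$ commute by Proposition \ref{gen_twisted}(6), applied with $S=V$, $T=W$.
\item $C$ intertwines them: $CA=V^*VW=W=WVV^*=BC$.
\item The relation $VW=UWV$ gives $U=VW(WV)^{-1}=VWV^*W^*=AB^*$.
\item Since $U\in\{V,W\}'$, we have $C=V^*\in\{U\}'=\{AB^*\}'$. In particular $V^*UV=U$, so $AB^*=V^*W^*VW$, which is the ``moreover'' part.
\end{itemize}

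\emph{From $(A,B,C)$ to $(V,W)$.} Let commuting unitaries $A,B$ and a unitary $C\in\{AB^*\}'$ with $CA=BC$ be given, and set $V=C^*$, $W=CA$. Both are unitary. We compute $VW=C^*CA=A$ and $WV=CAC^*=B$. Hence $VW=A=AB^*B=(AB^*)\,WV$, so the twist candidate is $U:=AB^*$, and it remains to show $U\in\{V,W\}'$.
\begin{itemize}
\item $U$ commutes with $C$ by assumption, hence with $C^*=V$, because $U$ and $C$ are unitary.
\item $U$ commutes with $A$: since $B$ is normal and $AB=BA$, the Fuglede--Putnam theorem gives $AB^*=B^*A$, so $A\cdot AB^*=AB^*\cdot A$.
\item Therefore $U$ commutes with $W=CA$.
\end{itemize}
This shows $(V,W)$ is $AB^*$-commuting.

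\emph{Bijectivity.} Starting from $(V,W)$ and returning, we get $C^*=V$ and $CA=V^*VW=W$. Starting from $(A,B,C)$ and returning, we get $VW=A$, $WV=B$, and $V^*=C$. So the two maps are mutually inverse, and the correspondence is one-to-one.

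The only step needing more than bookkeeping is checking that $U=AB^*$ lies in the commutant of $W=CA$. This requires $A$ to commute with $B^*$, not merely with $B$, and it is exactly where Fuglede--Putnam, or simply normality of unitaries, enters. Everything else is direct substitution.
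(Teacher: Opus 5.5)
Your proof is correct and follows the paper's argument essentially line by line. You use the same two maps, Proposition \ref{gen_twisted}(6) for $AB=BA$, and Fuglede--Putnam (or plain normality) to get $AB^*\in\{V,W\}'$, and you additionally check that the two maps are mutually inverse, which the paper leaves implicit.

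There is one small slip in the ``moreover'' step. The identity $V^*UV=U$ by itself only gives $U=WV^*W^*V$, so you also need $W^*UW=U$. More simply, you can write $AB^*=B^*A=(WV)^*VW=V^*W^*VW$.
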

\begin{proof}
If $(V,W)$ is twisted commuting, $A$ commutes with $B$ by Proposition \ref{gen_twisted} (6). Moreover, the relation $VW=UWV$ yields $U=VW V^*W^*=AB^*$ and, in turn, $V\in\{U\}'$ yields $C\in\{AB^*\}'$ by Fuglede-Putnam theorem. The intertwining property is obvious.

For the reverse implication, let $A,B$ and $C$  be given as in the statement.  Then $V=C^*$ and $W=CA$ commute with $AB^*$ by Fuglede-Putnam theorem and $$VW=C^*C A=A=AB^*B=AB^*(BCC^*)=AB^*(CAC^*)=AB^*WV.$$
\end{proof}

For a fixed unitary $U$, using the relation $U=AB^*,$ we may reduce the set of invariants from the triple $(A,B,C)$ to the pair $(A,C).$
\begin{corollary}
Let $U$ be a unitary operator. There is a one-to-one correspondence between $U$-commuting pairs of unitaries and pairs of unitaries $(A,C)$ satisfying: $A,C\in\{U\}'$ and $U=AC^*A^*C.$
\end{corollary}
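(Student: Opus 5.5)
This follows from Proposition \ref{U_mod} once $B$ is eliminated using the constraint $U=AB^*$. Given a $U$-commuting pair of unitaries $(V,W)$, I take the triple $A=VW$, $B=WV$, $C=V^*$ from Proposition \ref{U_mod}(1). The pair I assign to $(V,W)$ is $(A,C)$. I then check three things. First, $A$ and $C$ are unitaries; this is clear. Second, $A,C\in\{U\}'$. Since $V,W\in\{U\}'$, we get $A=VW\in\{U\}'$. Also $C=V^*\in\{U\}'$ by Fuglede--Putnam, as the commutant of a unitary is a $*$-algebra. Third, $U=AC^*A^*C$. This is a direct computation: $AC^*A^*C=VW\,V\,W^*V^*\,V^*$, which is not obviously $U$. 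So I will instead use the intertwining relation $CA=BC$ from Proposition \ref{U_mod}. It gives $B=CAC^*$, hence $B^*=CA^*C^*$ and $U=AB^*=ACA^*C^*$.

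The third check depends on the convention. The statement writes $AC^*A^*C$, which corresponds to taking $C=V$ rather than $V^*$. With $C=V$ we have $W=C^*A$ and $B=WV=C^*AC$, so $U=AB^*=AC^*A^*C$, exactly as in the statement. I will therefore use the convention $V=C$, $W=C^*A$ (equivalently, replace $C$ by $C^*$ in Proposition \ref{U_mod}). This is harmless, since $C\mapsto C^*$ is a bijection of unitaries preserving membership in $\{U\}'$.

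For the converse, let unitaries $A,C\in\{U\}'$ satisfy $U=AC^*A^*C$. I define $B:=C^*AC$, which is unitary. The identity then reads $U=AB^*$. Next I verify the hypotheses of Proposition \ref{U_mod}.

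The step needing most care is showing that $A$ and $B$ commute. Since $A\in\{U\}'$ and $U=AB^*$, we get $A\,AB^*=AB^*A$, hence $AB^*=B^*A$ after cancelling $A$ on the left. Taking adjoints and using unitarity gives $AB=BA$.

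The remaining hypotheses are routine. $C$ intertwines $A$ and $B$ because $CB=AC$ by definition. $C\in\{AB^*\}'=\{U\}'$ holds by assumption. Proposition \ref{U_mod}(2), in the adjusted convention, then yields the $U$-commuting pair $V=C$, $W=C^*A$, with twist $AB^*=U$.

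Finally I check that the two maps are mutually inverse. Starting from $(V,W)$, I get $(A,C)=(VW,V)$ and then recover $V=C$ and $W=C^*A=V^*VW$. Starting from $(A,C)$, I get $(V,W)=(C,C^*A)$ and then recover $VW=A$ and $V=C$. Both reconstructions are immediate, so the correspondence is one-to-one.
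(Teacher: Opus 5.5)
Your proof is correct and takes the same approach as the paper: eliminate $B$ from Proposition \ref{U_mod} by setting $B=C^*AC$, observe that $U=AC^*A^*C$ forces $B=U^*A$, which commutes with $A$, and read off $V=C$, $W=C^*A$. The paper's one-line proof leaves implicit two things you make explicit: the statement's $C$ is the adjoint of the $C=V^*$ in Proposition \ref{U_mod} (which uses $B=CAC^*$), and the maps $(V,W)\mapsto(VW,V)$ and $(A,C)\mapsto(C,C^*A)$ are mutually inverse.
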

\begin{proof}
The result follows from  Proposition \ref{U_mod} with $B=C^*AC.$ Indeed, $U=AC^*A^*C$ yields $B=C^*AC=U^*A\in\{A\}'.$
\end{proof}

The commutant of any unitary is known, so candidates $A,C\in\{U\}'$ are described for any unitary. However, the condition $U=AC^*A^*C$ is not simple to satisfy. The subject is easier to investigate with the help of the operator $B$. Since the twist satisfies $U=AB^*$, for a fixed $U,$ we take a unitary $A\in\{U\}'$ and define $B=U^*A,$ which commutes with $A$ and $U,$ and the pair $(A,B^*)$ factorizes $U$. The intertwining property between $A$ and $B$ by a unitary operator $C$ is equivalent to the unitary equivalence $B=C^*AC.$ Obviously, not all factorizations are between unitary equivalent factors. The trivial example is $U=IU,$ where $1$ is not an eigenvalue of $U$. Moreover, in the case  of factorization between unitary equivalent factors, the operator of unitary equivalence does not necessarily commute with the twist - see Example \ref{e_c}. In fact, we expect $C\notin\{A\}'$ and $C\in\{AB^*\}'.$ Indeed, if $C\in\{A\}',$ then $A=B,$ and in turn, the corresponding pair $(V,W)$ is commuting.

\begin{e}\label{e_c}
Let $$A=\left(\begin{array}{cc} M_z & 0\\0 & I\end{array}\right),\; B=\left(\begin{array}{cc} I & 0\\0 & M_z\end{array}\right),\text{ and so } AB^*=\left(\begin{array}{cc} M_z & 0\\0 & M_{\bar{z}}\end{array}\right)$$
be operators on $L^2(\mathbb{T})\oplus L^2(\mathbb{T}).$
One may check, that the unitary operator intertwining $A$ and $B$ has the form $C=\left(\begin{array}{cc} 0 & X\\Y & 0\end{array}\right),$ where $X,Y$ are unitary and $Y\in\{M_z\}'.$ However, $C\in\{AB^*\}'$ yields a contradiction $M_{\bar{z}}Y=YM_z.$
\end{e}

In Example \ref{e_c}, the factorization between unitarily equivalent factors does not generate a pair of twisted commuting unitaries. The following result describes all $U$-commuting pairs corresponding to one factorization of $U$.

\begin{theorem}\label{co_u}
Let $U$ be a unitary and $(A,B)$ be unitarily equivalent unitaries, such that $U=AB^*.$
Let $\mathfrak{C}_{A,U}$ denotes the set of $U$-commuting pairs of unitaries $(V,W)$ such that $VW=A, WV=B.$

Either $\mathfrak{C}_{A,U}=\emptyset$ or there is a one-to-one map $\{A,U\}'\mapsto\mathfrak{C}_{A,U}.$
\end{theorem}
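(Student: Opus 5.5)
Fix a pair $(V_0,W_0)\in\mathfrak{C}_{A,U}$, assuming the set is nonempty. By Proposition \ref{U_mod}, every pair $(V,W)\in\mathfrak{C}_{A,U}$ is determined by its $C=V^*$, with $W=CA=V^*A$. So $\mathfrak{C}_{A,U}$ is in bijection with the set
$$\mathcal{C}=\{C \text{ unitary}: C\in\{U\}',\ CAC^*=B\}.$$
Here the condition $WV=B$ reads $CAC^*=B$, i.e.\ $BC=CA$. Writing $C_0=V_0^*$, I will parametrize $\mathcal{C}$ by $X\mapsto C_0X$, or equivalently $XC_0$; I will fix one convention and check it.

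\textbf{Step 1 (the map is well defined).} Take $X\in\{A,U\}'$ and, so that the image is again unitary, take $X$ unitary. The natural reading of the statement is the unitary group of $\{A,U\}'$; if the author means the whole commutant I would restrict to its unitaries, since $V$ must be unitary. Put $C=C_0X$. Then $C$ commutes with $U$, because $C_0,X\in\{U\}'$. Moreover
$$CAC^*=C_0XAX^*C_0^*=C_0AC_0^*=B,$$
using $XA=AX$. By Proposition \ref{U_mod}, the pair $V=X^*C_0^*=X^*V_0$, $W=C_0XA$ is therefore $U$-commuting with $VW=A$ and $WV=B$.

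\textbf{Step 2 (injectivity).} The map $X\mapsto C_0X$ is clearly injective, and $C\mapsto (V,W)$ is injective because $V=C^*$.

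\textbf{Step 3 (surjectivity).} Conversely, if $C\in\mathcal{C}$, set $X=C_0^*C$. Then $X$ is unitary and commutes with $U$. It also commutes with $A$, since
$$XAX^*=C_0^*CAC^*C_0=C_0^*BC_0=A.$$
So every element of $\mathfrak{C}_{A,U}$ arises from some $X$, and the map is a bijection onto $\mathfrak{C}_{A,U}$.

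\textbf{Expected obstacle.} The computations are routine. The step needing care is the $U$-commutation in Step 1: the requirement is $C\in\{AB^*\}'$, and $AB^*=U$ is fixed by the hypothesis, so this holds as shown. One also needs $W=CA\in\{U\}'$, which follows because $A\in\{U\}'$ (as $U=AB^*$ with $A,B$ commuting) and $C\in\{U\}'$.

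The remaining point to pin down is exactly how $\{A,U\}'$ is meant, whether the full commutant or its unitary group. The argument gives a bijection between the unitary group of $\{A,U\}'$ and $\mathfrak{C}_{A,U}$, and I would state the result in that form. That bijection is in particular a one-to-one map from the unitaries of $\{A,U\}'$ onto $\mathfrak{C}_{A,U}$.
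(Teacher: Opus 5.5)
Your proof is correct and follows the paper's argument: fix one pair in $\mathfrak{C}_{A,U}$ and show that the other pairs are exactly its translates by unitaries in $\{A,U\}'$. The only differences are cosmetic. Your convention $C=C_0X$ produces the pairs $(X^*V_0,\,W_0X)$, while the paper uses $(V_1D,\,D^*W_1)$. Your restriction to the unitary group of $\{A,U\}'$ is also implicit in the paper's proof, which uses $D^*D=I$.
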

\begin{proof}
By Example \ref{e_c}, $\mathfrak{C}_{A,U}$ may be empty.

Assume $(V_1,W_1)\in\mathfrak{C}_{A,U},$ so the set is not empty. Then, taking $C_1=V_1^*$ in Proposition \ref{U_mod},  we have $V_1^*A=BV_1^*.$  If there is another pair $(V_2,W_2)\in \mathfrak{C}_{A,U},$ then $V_2^*A=BV_2^*.$ Consequently, $V_1AV_1^*=B=V_2AV_2^*,$ and in turn $V_2^*V_1\in\{A\}'.$ Since $V_1,V_2\in\{U\}',$ also $V_1^*V_2\in\{U\}'.$  Note that $(V_1,W_1)\neq (V_2,W_2),$ by $V_1W_1=A=V_2W_2$ yields $V_1\neq V_2.$ Since all operators are unitary, the map $\mathfrak{C}_{U,A}\ni (V,W)\mapsto V_1^*V\in \{A,U\}'$ is injective. On the other hand, for any $D\in\{A,U\}',$ the pair $(V_1D,D^*W_1)\in \mathfrak{C}_{U,A}.$ Indeed, $V_1D, D^*W_1$ commute with $U$ as products of such operators. Moreover, $V_1DD^*W_1=V_1W_1=A$ and $D^*W_1V_1D=D^*BD=D^*U^*AD=D^*DU^*A=B.$
\end{proof}

The second aim is to give an equivalent condition for twisted commutiativity between unitaries. As in the commuting case,  twisted commuting unitaries are doubly twisted commuting. Moreover, Proposition \ref{gen_dtwisted}(3) in the case of general isometries is correct under a weaker assumption of twisted commutativity. Let us give an equivalent condition for the pair of isometries to be twisted commuting.

\begin{proposition}\label{iso_twist}
A pair of isometries $(V,W)$ is twisted commuting if and only if the following conditions hold:
\begin{itemize}
\item $\ran{VW}=\ran{WV},$
\item $W^*VW$ commutes with $V,$
\item $V^*WV$ commutes with $W$.
\end{itemize}
\end{proposition}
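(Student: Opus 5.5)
The forward implication is a direct computation. The converse I would get by writing down an explicit candidate twist and checking that it commutes with $V$ and $W$; the last part of that check is the step I have not completed.

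\emph{Forward direction.} Assume $VW=UWV$ with $U$ unitary and $U\in\{V,W\}'$. By the Fuglede--Putnam theorem (as in Proposition \ref{gen_twisted}(2)), also $U\in\{V^*,W^*\}'$.

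For the first condition, Proposition \ref{gen_twisted}(5) gives $\cran{VW}=\cran{WV}$. Both ranges are closed because $VW$ and $WV$ are isometries, so $\ran{VW}=\ran{WV}$. (Directly: $U$ and $U^*$ leave $\ran{WV}$ invariant, since $UWVh=WVUh$.)

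For the commutation conditions, I compute
\[
W^*VW=W^*UWV=UW^*WV=UV,
\]
which commutes with $V$ because $U\in\{V\}'$. Symmetrically, $V^*WV=V^*U^*VW=U^*W$, which commutes with $W$.

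\emph{Converse.} Let $P$ be the orthogonal projection onto the closed subspace $M:=\ran{VW}=\ran{WV}$. Then $P=VWW^*V^*=WVV^*W^*$. I define the candidate twist
\[
U:=VW(WV)^*+(I-P)=VWV^*W^*+(I-P).
\]
On $M$, $U$ is the surjective isometry $WVh\mapsto VWh$; it is surjective because the two ranges coincide. On $M^\perp$, $U$ is the identity. Hence $U^*U=UU^*=I$, and $UWV=VW$ holds by construction.

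It remains to show $U\in\{V,W\}'$. I would first rewrite the hypotheses as intertwinings. Since $\ran{VW}=\ran{WV}\subset\ran W$, we have $WW^*VW=VW$, so $VW=WX$ with $X:=W^*VW$. Likewise $WV=VY$ with $Y:=V^*WV$. Both $X$ and $Y$ are isometries, and by hypothesis $XV=VX$ and $YW=WY$.

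On $M$ the commutation $UV=VU$ follows from these intertwinings:
\[
UV(WVh)=U(VWV)h=U(WXV)h=U(WV)Xh=VWXh=VVWh=VU(WVh).
\]
The computation for $UW=WU$ on $M$ is the symmetric one, using $Y$.

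\emph{Main obstacle.} The difficulty is $M^\perp=\ker (VW)^*$. For $k\in M^\perp$ we need $UVk=Vk$, but $Vk$ need not lie in $M^\perp$. With $V=W=S$ the unilateral shift, $V$ maps part of $M^\perp$ into $M$ (for instance $Ve_1=e_2\in M$ while $e_1\in M^\perp$), so no invariance argument settles this.

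What I would try first is to decompose $Vk=Pv+(I-P)v$. The component $(I-P)v$ is fixed by $U$, so the task reduces to showing $U$ fixes the component $Pv=WVg$, that is, $VWg=WVg$. I would attempt this using $Y=V^*WV$, the fact that $Y$ is an isometry commuting with $W$, and $k\perp\ran{VW}$.

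If that fails, I would exploit the freedom in choosing the twist described by Proposition \ref{U_uniq}. Only $U|_M$ is forced by $V$ and $W$, so I would replace the identity on $M^\perp$ by the extension of $VW(WV)^*$ to the smallest subspace reducing $V$ and $W$ that contains $M$, and take any unitary commuting with $V$ and $W$ on its complement (the subspace $H_0$ of Proposition \ref{U_uniq}).
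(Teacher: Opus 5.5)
Your forward direction is correct, and so is your check that your $U$ commutes with $V$ on $M$. The converse, however, has a genuine gap, and it cannot be closed with your candidate. For isometries the twist is forced: $VW=UWV=WVU$ together with $(WV)^*WV=I$ gives $U=(WV)^*VW=V^*W^*VW$. Equivalently, $\ker VW=\{0\}$, so $H_0=\{0\}$ in Proposition \ref{U_uniq}. That proposition also presupposes the pair is already twisted commuting, so your second fallback provides neither freedom nor a construction.

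The forced twist agrees with your $U$ on $M$, but it is in general not the identity on $M^\perp$. So your $U$ is simply not the twist, and it does not commute with $V$. Concretely, let $V=T_z$ and $W=R_q$, with $R_qf(z)=f(qz)$, on $H^2(\mathbb{T})$ and $q\neq 1$. Then $VW=\bar q\,WV$, all three hypotheses hold, and $M=zH^2(\mathbb{T})$. Your $U$ equals $\bar q$ on $M$ and $1$ on the constants, so $UV1=\bar q z\neq z=VU1$. Your first fallback fails here too: take $k=1$ and $g=\bar q$; then $VWg=\bar q z\neq z=WVg$.

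The missing idea, which is the paper's route, is to use the globally defined candidate $U_0:=V^*W^*VW$. Since $WV$ is an isometry, $\|U_0h\|=\|P\,VWh\|$, where $P$ is the projection onto $\ran{WV}$. Hence $U_0$ is an isometry iff $\ran{VW}\subseteq\ran{WV}$, and symmetrically $U_0^*$ is an isometry iff the reverse inclusion holds. So the first hypothesis makes $U_0$ unitary, with $VW=WVU_0$ and $WV=VWU_0^*$. Applying $W^*$, respectively $V^*$, gives $W^*VW=VU_0$ and $V^*WV=WU_0^*$. Therefore
\[
V(U_0V-VU_0)=(W^*VW)V-V(W^*VW),\qquad W(U_0^*W-WU_0^*)=(V^*WV)W-W(V^*WV).
\]
Since $V$ and $W$ are injective, the second and third hypotheses say exactly that $U_0$ commutes with $V$ and that $U_0^*$ commutes with $W$. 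The latter is the same as $U_0W=WU_0$ because $U_0$ is unitary. Hence $VW=WVU_0=U_0WV$ with $U_0\in\{V,W\}'$.

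A posteriori $U_0$ coincides with your $U$ on $M$. The correct extension to $M^\perp$ is therefore $U_0|_{M^\perp}$, not the identity.
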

\begin{proof}
 Since $VW=WV(V^*W^*VW),$ the pair $(V,W)$ is twisted commuting if and only if $V^*W^*VW$ is unitary and it commutes with $V$ and $W$. Clearly, $V^*W^*VW$ is unitary if and only if $\ran{VW}=\ran{WV}.$

Assume that $V^*W^*VW$ is unitary. Since unitary is normal, if $V$ commutes with $W^*VW,$ then it commutes with $V^*W^*VW$. For the reverse implication, note that $$V(V^*W^*VWV-VV^*W^*VW)=W^*VWV-VW^*VW.$$

Since $V^*W^*VW$ is assumed to be unitary, by Fuglede-Putman theorem, it commutes with $W$ if and only if its adjoint $(V^*W^*VW)^*=W^*V^*WV$ commutes with $W$. Hence, by similar arguments as above, we get that $W^*V^*WV$ commutes with $W$ if and only if $V^*WV$ commutes with $W$.

\end{proof}

In the case of unitary operators the first condition in Proposition \ref{iso_twist} is trivially satisfied. Any of the remaining two conditions may be replaced by the commutativity of products.

\begin{remark}\label{re_u}
Let $(V,W)$ be a pair of unitaries. Any two of the following conditions implies the third one:
\begin{enumerate}
\item $WV$ commutes with $VW$,
\item $W^*VW$ commutes with $V,$
\item $V^*WV$ commutes with $W.$
\end{enumerate}

Let us show that (2) and (3) imply (1). By Fuglede-Putnam theorem, (2) and (3) yield that $U:=W^*V^*WV$ commutes with $W$ and $V.$ Hence, $VWWV=(WVU)WV=WV(UWV)=WV(WVU)=WVVW$.

Let us show that (1) and (2) imply (3). By Fuglede-Putnam theorem, $V^*W^*$ commutes with $VW$ and, in turn,  \begin{align*}0=V^{*2}(W^*VWV-VW^*VW)=V^*V^*W^*VWV-V^*W^*VW\\=V^*VWV^*W^*V-V^*W^*VW=WV^*W^*V-V^*W^*VW.\end{align*}
Clearly, (1) and (3) imply (2) in a similar way.
\end{remark}
Hence, conditions Proposition \ref{gen_twisted}(6) and Proposition \ref{gen_dtwisted}(3) are sufficient for the twisted commutativity of unitaries.
\begin{theorem}\label{iso_unitary}
A pair of unitaries $(V,W)$ is twisted commuting if and only if their products commute and $W^*VW$ commutes with $V,$ (or equivalently $V^*WV$ commutes with $W$.)
\end{theorem}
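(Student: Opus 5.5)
The plan is to combine Proposition \ref{iso_twist} with Remark \ref{re_u}. For unitaries $V,W$ the product $VW$ is unitary, so $\ran{VW}=\ran{WV}=H$. The first condition of Proposition \ref{iso_twist} therefore holds automatically. For pairs of unitaries, Proposition \ref{iso_twist} then reads: $(V,W)$ is twisted commuting if and only if $W^*VW$ commutes with $V$ and $V^*WV$ commutes with $W$. This is exactly the pair of conditions (2) and (3) in Remark \ref{re_u}.

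For necessity, suppose $(V,W)$ is $U$-commuting. Proposition \ref{gen_twisted}(6) says $ST$ commutes with $TS$, so the products $VW$ and $WV$ commute. Proposition \ref{iso_twist} gives that $W^*VW$ commutes with $V$, and also that $V^*WV$ commutes with $W$. So both versions of the stated condition hold.

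For sufficiency, assume (1) of Remark \ref{re_u}, the commutation of $VW$ and $WV$, together with (2), that $W^*VW$ commutes with $V$. Remark \ref{re_u} shows that (1) and (2) imply (3), so $V^*WV$ commutes with $W$. Now (2) and (3) hold, and Proposition \ref{iso_twist} gives twisted commutativity. For the parenthetical version, assume (1) and (3). The remark says (1) and (3) imply (2) ``in a similar way'', and Proposition \ref{iso_twist} applies again. Concretely, the twist that appears is $U=V^*W^*VW$, which is unitary here.

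The only step that needs real care is the implication (1) and (3) $\Rightarrow$ (2), which the remark does not write out. I would obtain it by symmetry rather than recomputation. Swapping the roles of $V$ and $W$ leaves condition (1) unchanged and exchanges (2) and (3). So the computation already given for (1) and (2) $\Rightarrow$ (3), applied to the pair $(W,V)$, yields (1) and (3) $\Rightarrow$ (2). Throughout I would rely on Fuglede--Putnam, as the remark does, to pass between commuting with a unitary and commuting with its adjoint.
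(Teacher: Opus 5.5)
Your proposal is correct and follows the paper's own argument. For unitaries the range condition in Proposition \ref{iso_twist} is automatic, so twisted commutativity is equivalent to conditions (2) and (3) of Remark \ref{re_u}, and the remark's ``any two imply the third'' then lets you trade one of them for commutativity of the products. Your symmetry argument (swapping $V$ and $W$ fixes (1) and exchanges (2) and (3)) validly supplies the step (1) and (3) $\Rightarrow$ (2) that the remark leaves as ``similar''; taking necessity of (1) from Proposition \ref{gen_twisted}(6) is equally fine.
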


Conditions in Proposition \ref{iso_unitary} are optimal, as illustrated by the following examples.

\begin{e}\label{ex_a}
Let $V=M_z$ and $W=M_{z^2}(P+i(I-P))$  be two unitaries on $H=L^2(\mathbb{T})$, where $P$ is an orthogonal projection onto $\sspan{z^{2n}:n\in\mathbb{Z}}.$

Note that $M_{z^n}$ commutes with $P$ for $n$ odd, while $M_{z^n}P=(I-P)M_{z^n}$ and $M_{z^n}(I-P)=PM_{z^n}$ for $n$ even. In other words, $P+i(I-P)$ commutes with $M_{z^n}$ for $n$ odd, and  $M_{z^n}(P+i(I-P))=(iP+(I-P))M_{z^n}$ for $n$ even. In particular, $P$ commutes with $W$ and  $VP=(I-P)V,\; V(I-P)=PV$. Thus, we get  \begin{align}\label{exx}\begin{split}VW=M_{z^3}(P+i(I-P))=(iP+(I-P))M_{z^3},\\WV=M_{z^2}(P+i(I-P))M_z=(P +i(I-P))M_{z^3}.\end{split}
\end{align}
Hence, $VW=UWV$ for $U:=i(P-(I-P))=i(2P-I),$ which is unitary and commutes with $W.$ However, $UV=-VU$ and so, the pair $(V,W)$ is not twisted commuting.

Let us find unsatisfied conditions in Proposition \ref{re_u}. Since $$V^*WV=M_z^*M_{z^2}(P+i(I-P))M_z=M_{z^2}(iP+(I-P))$$ we get $V^*WVW=WV^*WV=iM_{z^4}$, so $V^*WV$ commutes with $W.$ Thus, the other two conditions are not satisfied. Indeed,  \begin{align*}W^*VW&=(P-i(I-P))M_{z^2}^*M_zM_{z^2}(P+i(I-P))\\&=(P-i(I-P))M_z(P+i(I-P))=M_zi(I-2P)\end{align*} yields $$W^*VWV=M_zi(I-2P)M_z=M_{z^2}i(2P-I)\neq M_{z^2}i(I-2P)=VW^*VW.$$ Also, by \eqref{exx}, we get \begin{align*}
WVVW=(P +i(I-P))M_{z^3}(iP+(I-P))M_{z^3}=(P-(I-P))M_{z^6}\\\neq((I-P)-P)M_{z^6}=(iP+(I-P))M_{z^3}(P +i(I-P))M_{z^3}=VWWV.\end{align*}
\end{e}

\begin{e}\label{ex_b}
Let $V=M_z$ and $W=M_zP+M_{\bar{z}}(I-P)$  be two unitaries on $H=L^2(\mathbb{T})$, where $P$ is an orthogonal projection onto $\sspan{z^{2n}:n\in\mathbb{Z}}$.
Then $VW=M_{z^2}P+(I-P)$ commutes with $WV=M_{z^2}(I-P)+P.$ However, neither $W^*VW=(PM_{\bar{z}}+(I-P)M_z)M_z(M_zP+M_{\bar{z}}(I-P))=M_{z^3}P+M_{\bar{z}}(I-P)$ does commute with $V$ nor $V^*WV=P+M_{\bar{z}^2}(I-P)$ commutes with $W$. Hence, the pair is not twisted commuting. Indeed, the twist $V^*W^*VW=(I-P)M_{\bar{z}^2}+M_{z^2}P=M_{\bar{z}^2}(I-P)+M_{z^2}P$ does not commute either with $V$ or $W$.
\end{e}

\end{document}